\newtheorem{thm}{Theorem}[section]
\newtheorem{lemma}[thm]{Lemma}
\newtheorem{prop}[thm]{Proposition}
\newtheorem{cor}[thm]{Corollary}
\theoremstyle{definition}
\newtheorem{defn}[thm]{Definition}
\newtheorem{example}[thm]{Example}
\theoremstyle{remark}
\newtheorem{rem}[thm]{Remark}
\numberwithin{equation}{section}
\DeclareMathOperator{\ann}{Ann}
\DeclareMathOperator{\Ker}{Ker}
\DeclareMathOperator{\Hom}{Hom}
\DeclareMathOperator{\End}{End}
\DeclareMathOperator{\Rad}{Rad}
\newcommand{\sm}{\sigma[M]}
\newcommand{\dleq}{\leq^\oplus}
\begin{document}

\title{The nilpotency of the prime radical of a Goldie module}

\author{John A. Beachy}
\address{Department of Mathematical Sciences, Northern Illinois University,
Dekalb 60115, Illinois, US}
\email{jbeachy@niu.edu}

\author{Mauricio Medina-B\'arcenas}
\address{Facultad de Ciencias F\'isico-Matem\'aticas, Benem\'erita Universidad Aut\'onoma de Puebla, Av. San Claudio y 18 Sur, Col.San Manuel, Ciudad Universitaria, 72570, Puebla, M\'exico}
\email{mmedina@fcfm.buap.mx, corresponding author.}
\thanks{The second author was supported by the grant ``CONACYT-Estancias Posdoctorales 1er A\~no 2019 - 1''.}
\subjclass[2010]{Primary 16D70, 16P60; Secondary 16D40, 16D80}


\date{}


\keywords{Goldie module, Prime radical, Nilpotent submodule, Retractable module, ACC on annihilators, Projective module}

\begin{abstract}
With the notion of prime submodule defined by F. Raggi et.al. we prove that the intersection of all prime submodules of a Goldie module $M$, is a nilpotent submodule provided that $M$ is retractable and $M^{(\Lambda)}$-projective for every index set $\Lambda$. This extends the well known fact that in a left Goldie ring, the prime radical is nilpotent.
\end{abstract}

\maketitle

\section{Introduction}

For a left Artinian ring $R$, it is known that the prime radical of $R$ coincides with its Jacobson radical. Therefore, in any left Artinian ring the prime radical is always nilpotent. For an arbitrary ring $R$, it is no longer true that the prime radical equals the Jacobson radical. In general, the prime radical of $R$ is contained properly in its Jacobson radical. It can be seen that the prime radical of a ring $R$ consists of nilpotent elements, that is, the prime radical is a nil ideal. It has been a recurrent question in ring theory whether the nil (one-sided) ideals are nilpotent. Levitzki, in 1939 (see \cite[10.30]{lamfirst} and the paragraph below the proof) proved that every nil one-sided ideal of a left Noetherian ring is nilpotent. In particular, the prime radical of a left Noetherian ring is nilpotent. This result was extended for a left Goldie ring by Lanski \cite{lanski1969nil} in 1969. 

In \cite{BicanPr}, a product of modules generalizing the product of a left ideal with a left module  was introduced. Hence, this product gives a product of submodules of a given module and it coincides with the usual product of ideals when the module is the base ring. In 2002, the first author proved that this product in a left $R$-module $M$ is associative provided that $M$ is $M^{(\Lambda)}$-projective for every index set $\Lambda$ \cite[Proposition 5.6]{beachy2002m}. Later, in \cite{raggiprime} (resp. \cite{raggisemiprime}), it is introduced the concept of  prime (resp. semiprime) submodule of a module $M$ using the product mentioned above. As part of the second author's dissertation, in \cite{maugoldie} semiprime Goldie modules were presented as a generalization of semiprime left Goldie rings. In this paper, for a left $R$-module $M$ which is $M^{(\Lambda)}$-projective for every index set $\Lambda$, we consider the intersection of all prime submodules of $M$ and we prove that this intersection is nilpotent when $M$ is Goldie, extending Lanski's result \cite[Theorem 1]{lanski1969nil}. The nilpotency of the prime radical of a Goldie module will be used in a subsequent paper when the finite reduced rank is studied in categories of type $\sigma[M]$ \cite{beachyreduced}.

In order to achieve our goal, this paper is divided in five sections. The first section is this introduction and the next one compiles the background needed to make this work as self-contained as possible. In Section \ref{LocNilSub}, nil-submodules and locally nilpotent submodules are defined. It is proved that the sum of all locally nilpotent submodules of a module inherits the property (Lemma \ref{sumlocnil}). For a module $M$, this maximal locally nilpotent submodule is denoted by $\mathfrak{L}(M)$ and it happens that $\mathfrak{L}\left(M/\mathfrak{L}(M) \right)=0$ provided that $M$ is $M^{(\Lambda)}$-projective for every index set $\Lambda$ (Proposition \ref{Lfiyrad}). Moreover, the submodule $\mathfrak{L}(M)$ coincides with the intersection of all prime submodules of $M$ (Corollary \ref{nesL}). At the end of this section, using the operator $\mathfrak{L}(\_)$, a characterization of semiprime rings in terms of free modules is given (Corollary \ref{rsp}). In Section \ref{AccandNil}, we introduce the notion of ascending chain condition (acc) on annihilators on a module $M$. We study the behavior of nil submodules of a module satisfying acc on annihilators. We show that fully invariant nil submodules are locally nilpotent when $M$ is $M^{(\Lambda)}$-projective for every index set $\Lambda$ and satisfies acc on annihilators (Proposition \ref{accnillocnil}). Also in this section, a fundamental result of the paper is proved (Proposition \ref{subm}). Finally in Section \ref{Nilpotency}, the main result is proved. It is shown that a fully invariant nil submodule of a Goldie module $M$ is nilpotent, under the hypotheses that $M$ is $M^{(\Lambda)}$-projective for every index set $\Lambda$ and retractable (Theorem \ref{main}). As under the previous  hypothesis on $M$, when $M$ is a Goldie module its endomorphism ring is a right Goldie ring (Lemma \ref{mgolsgol}), the proof of Theorem \ref{main} is focused on finding elements in the endomorphism ring of a Goldie module like those in \cite[Lemma 8]{lanski1969nil}. 

Throughout this paper $R$ will denote an associative ring with unit and all the $R$-modules will be unitary left modules. The notation $N\leq M$ and $N<M$ will mean $N$ is a submodule of $M$ and $N$ is a proper submodule of $M$ respectively. Given an $R$-module $M$, the endomorphism ring of $M$ usually will be denoted by $S=\End_R(M)$. A direct sum of copies of a module $M$ will be denoted by $M^{(\Lambda)}$ where $\Lambda$ is a set. 

\section{Preliminaries}
Let $M$ and $N$ be left $R$-modules. It is said that $M$ is \emph{$N$-projective} if for any epimorphism $\rho:N\to L$ and any homomorphism $\alpha:M\to L$, there exists $\overline{\alpha}:M\to N$ such that $\rho\overline{\alpha}=\alpha$. It is true that if $M$ is $N_i$-projective for a finite family of modules $\{N_1,...,N_\ell\}$, then $M$ is $\bigoplus_{i=1}^\ell N_i$-projective \cite[18.2]{wisbauerfoundations}. In general, this is not true for arbitrary families. A module $M$ is called \emph{quasi-projective} if, $M$ is $M$-projective. In most of the results in this paper, we will assume that $M$ is $M^{(\Lambda)}$-projective for every index set $\Lambda$, this hypothesis is satisfied by every finitely generated quasi-projective module for example \cite[18.2]{wisbauerfoundations}. The condition that $M$ is $M^{(\Lambda)}$-projective for every index set $\Lambda$ is equivalent to say that $M$ is projective in the category $\sm$ \cite[18.3]{wisbauerfoundations}, where $\sm$ is the category consisting of all $M$-subgenerated modules. In many of our references, this last equivalence is used. It will also be used, inside the proofs, that if $M$ is $M^{(\Lambda)}$-projective for every index set $\Lambda$ and $N$ is a fully invariant submodule of $M$ then, $M/N$ is $(M/N)^{(\Lambda)}$-projective for every index set $\Lambda$ \cite[Lemma 9]{vanmodules}.


Given a left $R$-module $M$ and $N,K\leq M$, \emph{the product of $N$ with $K$ in $M$} is defined as 
\[N_MK=\sum\{f(N)\mid f\in\Hom_R(M,K)\}.\]
If $M$ is $M^{(\Lambda)}$-projective for every index set $\Lambda$, this product gives an associative operation in the lattice of submodules of $M$. Moreover, if $N$ and $K$ are fully invariant submodules of $M$, then $N_MK$ is fully invariant.  For $N\leq M$, \emph{the powers of $N$} are defined recursively as follows: $N^1=N$ and $N^{\ell+1}={N^\ell}_MN$. \footnote{Do not confuse the notation with the direct product of $\ell+1$ copies of $N$ which will be denoted by $N^{(\ell+1)}$.} It is said that a submodule $N\leq M$ is \emph{nilpotent} if $N^\ell=0$ for some $\ell>0$. Some general properties of this product are listed in \cite[Proposition 1.3]{PepeGab}. Also, we give the following useful lemmas.

\begin{lemma}\label{proddirsumm}
	Let $M$ be a module and $N\leq M$. Then $K_ML\subseteq K_NL$ for all $K,L\leq N$. In addition, if $N\dleq M$ then $K_ML=K_NL$.
\end{lemma}

\begin{proof}
	Let $K,L$ be submodules of $N$. Using the restriction of a homomorphism, if $f:M\to L$ is any homomorphism then $f|_N(K)=f(K)$ since $K\leq N$. Thus $K_ML\subseteq K_NL$. Now suppose $N$ is a direct summand of $M$. If $f:N\to L$ then $f\oplus 0:M\to L$. Hence $K_NL\subseteq K_ML$. Thus $K_NL=K_ML$.
\end{proof}

\begin{lemma}\label{fprod}
	Let $M$ be a quasi-projective module. If $A$ and $B$ are submodules of $M$ and $f\in\End_R(M)$, then $f(A_MB)=A_Mf(B)$ and $f(A)_MB\subseteq A_MB$.
\end{lemma}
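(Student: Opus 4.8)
The plan is to treat the two stated inclusions separately, and within the equality $f(A_MB)=A_Mf(B)$ to split off the routine inclusion from the substantial one, both easy facts coming straight from the definition of the product as a sum of images. For $f(A_MB)\subseteq A_Mf(B)$: writing $A_MB=\sum\{h(A)\mid h\in\Hom_R(M,B)\}$ and applying $f$ gives $f(A_MB)=\sum\{(fh)(A)\mid h\in\Hom_R(M,B)\}$, and since each $fh$ is a homomorphism $M\to f(B)$, every summand lies in $A_Mf(B)$. The same bookkeeping handles the second assertion: $f(A)_MB=\sum\{h(f(A))\mid h\in\Hom_R(M,B)\}=\sum\{(hf)(A)\mid h\in\Hom_R(M,B)\}$, and as $hf\in\Hom_R(M,B)$ each summand sits inside $A_MB$, giving $f(A)_MB\subseteq A_MB$.

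The content is the reverse inclusion $A_Mf(B)\subseteq f(A_MB)$. Since $A_Mf(B)=\sum\{q(A)\mid q\in\Hom_R(M,f(B))\}$, it suffices to show $q(A)\subseteq f(A_MB)$ for each $q:M\to f(B)$. I would like to factor such a $q$ through $f$, i.e.\ to produce $p\in\Hom_R(M,B)$ with $fp=q$; granting this, $q(A)=f(p(A))\subseteq f(A_MB)$ because $p(A)\subseteq A_MB$ by the very definition of the product, and summing over all $q$ then finishes the argument.

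The main obstacle is producing the factorization $p$. The point is that $f$ restricts to an epimorphism $f|_B:B\to f(B)$, so what is being asked is a lift of $q:M\to f(B)$ through $f|_B$, which is exactly $B$-projectivity of $M$. Quasi-projectivity of $M$ says $M$ is $M$-projective, and the standard results on relative projectivity applied to the short exact sequence $0\to B\to M\to M/B\to 0$ (available from \cite[18.2]{wisbauerfoundations}) show that $M$-projectivity is inherited by every submodule, so $M$ is $B$-projective. Applying this to the epimorphism $f|_B$ and the map $q$ yields the desired $p\in\Hom_R(M,B)$ with $fp=q$, which combined with the first inclusion gives $f(A_MB)=A_Mf(B)$.

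The one place that genuinely deserves care is confirming that the cited machinery delivers projectivity relative to the \emph{submodule} $B$, and not merely relative to factor modules of $M$. If one prefers an explicit justification, I would form the pushout $Q$ of $f|_B:B\to f(B)$ along the inclusion $B\hookrightarrow M$; then the induced map $M\to Q$ is an epimorphism and the induced map $f(B)\to Q$ is a monomorphism, so by $M$-projectivity the composite $M\xrightarrow{q}f(B)\hookrightarrow Q$ lifts to some $\beta:M\to M$, and a direct check that the preimage in $Q$ forces $\beta(M)\subseteq B$ produces the required $p$ as the corestriction of $\beta$.
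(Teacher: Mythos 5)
Your proof is correct and follows essentially the same route as the paper: both easy inclusions are read off from the definition of the product, and the reverse inclusion $A_Mf(B)\subseteq f(A_MB)$ is obtained by lifting each $q:M\to f(B)$ through the epimorphism $f|_B:B\to f(B)$ using quasi-projectivity. The only difference is that you explicitly justify (via \cite[18.2]{wisbauerfoundations} or the pushout argument) that $M$-projectivity yields $B$-projectivity for the submodule $B$, a step the paper's proof leaves implicit.
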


\begin{proof}
	Let $f\left( \sum_{i=1}^\ell g_i(a_i)\right)\in f(A_MB)$. Then,
	\[f\left( \sum_{i=1}^\ell g_i(a_i)\right)=\sum_{i=1}^\ell fg_i(a)\in A_Mf(B).\]
	On the other hand, let $\sum_{i=1}^\ell h_i(a_i)\in A_Mf(B)$. The restriction $f|_B:B\to f(B)$ is an epimorphism. Since $M$ is quasi-projective, there exists $g_i:M\to B$ such that $fg_i=h_i$ for all $i$. Hence,
	\[\sum_{i=1}^\ell h_i(a_i)=\sum_{i=1}^\ell fg_i(a_i)=f\left( \sum_{i=1}^\ell g_i(a_i)\right)\in f(A_MB).\]
	For the other assertion, let $\sum_{i=1}^\ell g_i(f(a_i))\in f(A)_MB$. Then $g_if:M\to B$. Thus, $\sum_{i=1}^\ell g_if(a_i)\in A_MB$.
\end{proof}

\begin{lemma}\label{epiproduct}
	Let $M$ be quasi-projective and let $K,N\leq M$ be two submodules with $K$ fully invariant in $M$. If $\pi:M\to M/K$ is the canonical projection, then $\pi(N_MN)=\pi(N)_{M/K}\pi(N)$. 
\end{lemma}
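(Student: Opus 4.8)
The plan is to establish the two inclusions separately, writing throughout $\overline{N}=\pi(N)$ and using the defining description of each product as a sum of images of homomorphisms. The inclusion $\pi(N_MN)\subseteq\pi(N)_{M/K}\pi(N)$ should only use that $K$ is fully invariant, whereas the reverse inclusion is where quasi-projectivity enters, and I expect it to be the hard part.

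For $\pi(N_MN)\subseteq\pi(N)_{M/K}\pi(N)$, I would argue on generators. A typical summand of $N_MN$ is $f(N)$ with $f\in\Hom_R(M,N)$. Composing with the inclusion $N\hookrightarrow M$, I may regard $f$ as an endomorphism of $M$; since $K$ is fully invariant, $f(K)\subseteq K$, so $f$ descends to $\overline{f}\in\End_R(M/K)$ with $\overline{f}\pi=\pi f$. Because $f(M)\subseteq N$ we get $\overline{f}(M/K)=\pi(f(M))\subseteq\pi(N)$, so $\overline{f}\in\Hom_R(M/K,\pi(N))$ and $\pi(f(N))=\overline{f}(\pi(N))$ is one of the generators of $\pi(N)_{M/K}\pi(N)$. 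Summing over all such $f$ yields the inclusion, and no projectivity hypothesis is needed here.

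For the reverse inclusion $\pi(N)_{M/K}\pi(N)\subseteq\pi(N_MN)$, a generator of the left-hand side is $g(\pi(N))$ with $g\in\Hom_R(M/K,\pi(N))$, and the goal is to realize it as $\pi(f(N))$ for some $f\in\Hom_R(M,N)$. I would form $g\pi\colon M\to M/K$ and, using that $\pi$ is an epimorphism and $M$ is quasi-projective, lift it to some $h\in\End_R(M)$ with $\pi h=g\pi$. Then $\pi(h(N))=g(\pi(N))$, and since $g(M/K)\subseteq\pi(N)$ the image satisfies $h(M)\subseteq\pi^{-1}(\pi(N))=N+K$. (Note also that the full invariance of $K$ forces $h(K)\subseteq K$, so $h$ does descend to $\overline{h}=g$ on $M/K$.)

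The main obstacle is precisely at this point: the lift produced by quasi-projectivity has image only in $N+K$, while to conclude I need a homomorphism landing in $N$, so that it is a genuine contributor to $N_MN$. Equivalently, I must show $h(N)\subseteq N_MN+K$, i.e.\ that the reduction map $\Hom_R(M,N)\to\Hom_R(M/K,\pi(N))$, $f\mapsto\overline{f}$, is surjective. The step I expect to be delicate is correcting $h$ by a homomorphism $M\to K$ so as to move its image inside $N$ without altering its reduction modulo $K$; this is exactly where quasi-projectivity must be played off against the full invariance of $K$, and Lemma \ref{fprod} together with the associativity of the product is the natural tool to control the resulting $K$-part. Once such an $f\in\Hom_R(M,N)$ with $\pi(f(N))=g(\pi(N))$ is produced, summing over $g$ gives $\pi(N)_{M/K}\pi(N)\subseteq\pi(N_MN)$ and hence the desired equality.
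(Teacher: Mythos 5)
Your first inclusion is complete and agrees with the paper: descending $f\in\Hom_R(M,N)$ along $\pi$ uses only that $K$ is fully invariant. The problem is the reverse inclusion, where you correctly locate the difficulty but do not resolve it. Having lifted $g\pi$ through $\pi\colon M\to M/K$ to get $h\in\End_R(M)$ with $\pi h=g\pi$ and $h(M)\subseteq N+K$, you propose to ``correct $h$ by a homomorphism $M\to K$'' so as to move its image into $N$; but you never produce this correction, and your chosen route makes it genuinely problematic. Writing $h=f+c$ with $f(M)\subseteq N$ and $c(M)\subseteq K$ amounts to surjectivity of the natural map $\Hom_R(M,N)\oplus\Hom_R(M,K)\to\Hom_R(M,N+K)$, i.e.\ to $M$ being projective relative to the addition epimorphism $N\oplus K\to N+K$. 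That is not a visible consequence of quasi-projectivity, and Lemma \ref{fprod} (which controls images of products under endomorphisms) does not supply it. So as written, the proof of $\pi(N)_{M/K}\pi(N)\subseteq\pi(N_MN)$ is an announced plan rather than an argument.

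The paper avoids the detour entirely: given $\bar{f}\in\Hom_R(M/K,\pi(N))$, it lifts the map $\bar{f}\pi\colon M\to\pi(N)$ through the \emph{restricted} epimorphism $\pi|_N\colon N\to\pi(N)$, obtaining in one step an $f\in\Hom_R(M,N)$ with $\pi f=\bar{f}\pi$, whence $\bar{f}(\pi(N))=\pi(f(N))\subseteq\pi(N_MN)$. This is the same device used in the proof of Lemma \ref{fprod} (lifting through $f|_B\colon B\to f(B)$), and it is exactly the form of relative projectivity the paper invokes under the name ``quasi-projective.'' To repair your version, replace the lift through $\pi\colon M\to M/K$ by a lift through $\pi|_N$; then the image of the lift lies in $N$ from the start and no correction term is needed.
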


\begin{proof}
	Since $K$ is fully invariant in $M$, given $f\in\Hom_R(M,N)$ there is an $\bar{f}\in\Hom_R(M/K,\pi(N))$ such that $\pi f=\bar{f}\pi$. Now, since $M$ is quasi-projective, if $\bar{f}\in\Hom_R(M/K,\pi(N))$ then there exists $f\in\Hom_R(M,N)$ such that $\pi f=\bar{f}\pi$. Thus, there is an epimorphism $\Hom_R(M,N)\to \Hom_R(M/K,\pi(N))\to 0$ induced by $\pi$.
	It follows that:
	\begin{equation*}
	\begin{split}
	\pi(N_MN) & =\pi\left( \sum\{f(N)\mid f\in\Hom_R(M,N)\}\right) \\
	& =\sum\{\pi f(N)\mid f\in \Hom_R(M,N)\}  \\
	& =\sum\{\bar{f}\pi(N)\mid \bar{f}\in\Hom_R(M/K,\pi(N))\}\\
	& =\pi(N)_{M/K}\pi(N).
	\end{split}
	\end{equation*}
\end{proof}

In a natural way, a fully invariant submodule $P$ of a module $M$ is said to be a \emph{prime submodule} (resp. \emph{semiprime submodule}) if $N_MK\leq P$ (resp. $N_MN\leq P$) implies $N\leq P$ or $K\leq P$ (resp. $N\leq P$) for every fully invariant submodules $N$ and $K$. It is said that $M$ is a \emph{prime module} (resp. \emph{semiprime module}) if $0$ is a prime (resp. semiprime) submodule. Many properties known for prime or semiprime ideals can be extended to prime or semiprime sumodules as it has been done in  \cite{beachy2002m,beachy2020fully,PepeGab,mauozcan,raggiprime,raggisemiprime}. Recall that a ring $R$ is \emph{left Goldie}, if $R$ satisfies the ascending chain condition (acc) on left annihilators and has finite uniform dimension. This notion can be extended to left modules as follows: a left module $M$ is called \emph{Goldie} if the set $\{\bigcap_{f\in X}\Ker f\mid X\subseteq \End_R(M)\}$ satisfies acc and $M$ has finite uniform dimension. It is clear that a ring $R$ is a left Goldie ring if and only if $_RR$ is a Goldie module \cite{maugoldie}.

\section{Locally nilpotent submodules}\label{LocNilSub}

%
%
%
%
%
%

\begin{defn}
Let $M$ be an $R$-module and $S=\End_R(M)$. A submodule $N$ of $M$ is called a \emph{nil-submodule} if for all $n\in N$, the right ideal $\Hom_R(M,Rn)$ of $S$ is nil. That is, if each $f:M\to Rn$ is nilpotent.
\end{defn}

%
%
%

\begin{lemma}\label{factornil}
Let $M$ be a quasi-projective module and let $N$ be a nil-submodule of $M$. Then, $\frac{N+K}{K}$ is a nil-submodule of $M/K$ for all $K\leq M$. 
\end{lemma}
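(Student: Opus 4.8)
The plan is to verify the defining property of a nil-submodule for $\frac{N+K}{K}$ directly. Write $\pi\colon M\to M/K$ for the canonical projection. First I would note that every element of $\frac{N+K}{K}$ has the form $\bar{n}=\pi(n)$ with $n\in N$ (since any $n+k$ with $n\in N$, $k\in K$ represents $\pi(n)$), and that $R\bar{n}=\pi(Rn)=(Rn+K)/K$. Thus it suffices to show that for an arbitrary $n\in N$, every $\bar{f}\in\Hom_R(M/K,R\bar{n})$ is nilpotent as an element of $\End_R(M/K)$.

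The mechanism for transferring nilpotency is as follows. Suppose I can produce a homomorphism $f\in\Hom_R(M,Rn)$ with $\pi f=\bar{f}\pi$. Since $N$ is a nil-submodule and $n\in N$, any such $f$ (viewed in $\End_R(M)$ via $Rn\leq M$) is nilpotent, say $f^\ell=0$. A one-line induction from $\pi f=\bar{f}\pi$ gives $\pi f^k=\bar{f}^k\pi$ for every $k$, so $\bar{f}^\ell\pi=\pi f^\ell=0$; because $\pi$ is an epimorphism this forces $\bar{f}^\ell=0$, i.e. $\bar{f}$ is nilpotent. This would complete the proof, so the entire argument reduces to producing the lift $f$.

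The construction of $f$ is the crux. The restriction $\pi|_{Rn}\colon Rn\to R\bar{n}$ is an epimorphism (its image is $\pi(Rn)=R\bar{n}$), and $\bar{f}\pi\colon M\to R\bar{n}$ is a homomorphism, so I want to lift $\bar{f}\pi$ through $\pi|_{Rn}$ to a map landing in $Rn$. Using quasi-projectivity of $M$ I can at least lift $\bar{f}\pi$ along the epimorphism $\pi\colon M\to M/K$ to some $f_0\in\End_R(M)$ with $\pi f_0=\bar{f}\pi$; the difficulty is that this only yields $f_0(M)\subseteq\pi^{-1}(R\bar{n})=Rn+K$, whereas the nil hypothesis controls only homomorphisms into the \emph{cyclic} module $Rn$. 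The main obstacle is therefore to arrange the image of the lift inside $Rn$ rather than merely inside $Rn+K$. I expect to resolve it by exploiting that the two lifts of $\bar{f}\pi$ along $\pi$ differ by an element of $\Hom_R(M,K)$ and by using the projectivity of $M$ relative to $Rn$ to select a lift with image in $Rn$; this is precisely the step where the quasi-projectivity of $M$ must be used carefully. Once such an $f\in\Hom_R(M,Rn)$ is secured, the nilpotency transfer of the previous paragraph finishes the argument, the remaining verifications being routine.
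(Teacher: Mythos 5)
Your overall architecture matches the paper's proof: reduce to a single $\bar f\in\Hom_R(M/K,R\bar n)$ with $n\in N$, produce a lift $g\in\Hom_R(M,Rn)$ satisfying $\pi g=\bar f\pi$, and then transfer nilpotency via $\bar f^{\,k}\pi=\pi g^{k}=0$. That last computation is exactly the paper's, and it is correct. The problem is that you leave the decisive step --- actually producing the lift with image inside $Rn$ --- unproved, announcing it as ``the main obstacle'' that you ``expect to resolve.'' As written, the proposal is therefore incomplete precisely at the only point where the quasi-projectivity hypothesis does any work.

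The obstacle you describe is not actually there, and your proposed detour (first lift $\bar f\pi$ along $\pi:M\to M/K$ to some $f_0$ with image in $Rn+K$, then try to correct $f_0$ by an element of $\Hom_R(M,K)$) is the wrong move: there is no reason a correction term should exist that pushes the image into $Rn$. The paper instead lifts through a different epimorphism. Since $M$ is quasi-projective and $Rn\leq M$, the module $M$ is $Rn$-projective (relative projectivity passes to submodules of $M$, \cite[18.2]{wisbauerfoundations}). The restriction $\pi|_{Rn}:Rn\to R(n+K)$ is an epimorphism, and $\bar f\pi:M\to R(n+K)$ is a homomorphism, so $Rn$-projectivity yields directly a $g:M\to Rn$ with $(\pi|_{Rn})g=\bar f\pi$; no correction of a preliminary lift is needed. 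With that one sentence inserted, your argument closes and coincides with the paper's. You should make this repair explicitly, because ``projectivity of $M$ relative to $Rn$'' is mentioned in your last paragraph only as a hope, not as the applied tool, and because the standard fact that $M$-projectivity implies projectivity relative to submodules is exactly what licenses it.
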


\begin{proof}
Let $n+K\in \frac{N+K}{K}$ and $f\in\Hom_R(M/K,R(n+K))$. Let $\pi:M\to M/K$ be the canonical projection and consider $\pi|:Rn\to R(n+K)$ the restriction to $Rn$. Since $M$ is quasi-projective, there exists $g:M\to Rn$ such that $(\pi|)g=f\pi$. By hypothesis, there exists $k>1$ such that $g^k=0$. Therefore,
\[0=(\pi|)g^k=f\pi g^{k-1}=f(\pi g)g^{k-2}=f(f\pi)g^{k-2}=\cdots=f^k\pi.\]
This implies that $f^k=0$ and hence $\frac{N+K}{K}$ is a nil-submodule of $M/K$.
\end{proof}


Recall that a subset $T$ of a ring $R$ is called \emph{locally nilpotent} if, for any finite subset $\{t_1,...,t_n\}\subseteq T$ there exists an integer $\ell$ such that any product of $\ell$ elements from $\{t_1,...,t_n\}$ is zero \cite[pp. 166]{lamfirst}.

\begin{defn}
Let $N$ be a submodule of a module $M$. It is said that $N$ is \emph{locally nilpotent} if for any subset $\{n_1,...,n_k\}\subseteq N$ there exists an integer $\ell>0$ such that any combination of $\ell$ elements of $\{n_1,...,n_k\}$ satisfies that 
\[{Rn_{i_1}}_M{Rn_{i_2}}_M\cdots{_MRn_{i_\ell}}=0.\]
\end{defn}

Note that if $N$ is a locally nilpotent submodule of $M$, then $Rn$ is a nilpotent submodule for every $n\in N$, but $N$ might not be nilpotent as the following example shows.

\begin{example}
Consider the $\mathbb{Z}$-module $\mathbb{Q}$. Every nonzero cyclic submodule of $_\mathbb{Z}\mathbb{Q}$ is isomorphic to $\mathbb{Z}$ and $\mathbb{Z}_{\mathbb{Q}}\mathbb{Z}=0$. This implies that $_\mathbb{Z}\mathbb{Q}$ is locally nilpotent. But $\mathbb{Q}^\ell=\mathbb{Q}$ for all $\ell>0$.
\end{example}

\begin{rem}
It is clear that if a left ideal $I$ of a ring $R$ is a locally nilpotent submodule, then $I$ is a locally nilpotent subset. However, the converse might not be true. For, consider the ring $R$ given in \cite[pp. 167]{lamfirst}. The ring $R$ is prime, so has no nonzero nilpotent submodules. But it is proved that $R$ has a cyclic left ideal $Rx$ which is locally nilpotent. Therefore, $Rx$ cannot be a locally nilpotent submodule of $_RR$.
\end{rem}

\begin{rem}
If $N\leq M$ is locally nilpotent, then $N$ is a nil-submodule. For, let $n\in N$ and $f\in\Hom_R(M,Rn)$. Consider $\{n\}\subseteq N$. Then, there is an $\ell>0$ such that $(Rn)^\ell=0$. Since $f(n)\in Rn_MRn$, $f^{\ell-1}(n)\in(Rn)^{\ell}=0$. Thus,  $f^\ell=0$. On the other hand, if $N\leq M$ is a nilpotent submodule, then $N$ is locally nilpotent.
\end{rem}

\begin{lemma}\label{fgnilp}
Suppose $M$ is $M^{(\Lambda)}$-projective for every index set $\Lambda$ and let $N$ be a finitely generated submodule of $M$. If $N$ is locally nilpotent, then $N$ is nilpotent.
\end{lemma}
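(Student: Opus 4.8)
The plan is to reduce the nilpotency of $N$ to the vanishing of finitely many products of cyclic submodules, which is exactly what local nilpotency supplies. Write $N=Rn_1+\cdots+Rn_k$. Applying the definition of locally nilpotent to the finite generating set $\{n_1,\dots,n_k\}$ yields an integer $\ell>0$ such that ${Rn_{i_1}}_M\cdots{}_M{Rn_{i_\ell}}=0$ for every choice of indices $i_1,\dots,i_\ell\in\{1,\dots,k\}$. I claim $N^\ell=0$, which is the desired conclusion.

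The main step is to expand $N^\ell$ into a finite sum of such products. Since $M$ is $M^{(\Lambda)}$-projective for every $\Lambda$, the product ${}_M{}$ is associative, so it suffices to show that it distributes over the sum $N=\sum_{i=1}^k Rn_i$ in both variables; a routine induction then gives
\[N^\ell=\sum_{(i_1,\dots,i_\ell)\in\{1,\dots,k\}^\ell}{Rn_{i_1}}_M{Rn_{i_2}}_M\cdots{}_M{Rn_{i_\ell}},\]
and every summand on the right is $0$, forcing $N^\ell=0$. Left distributivity, $\left(\sum_\alpha K_\alpha\right)_M L=\sum_\alpha (K_\alpha)_M L$, is immediate from the definition, since each $f\in\Hom_R(M,L)$ is additive.

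The only delicate point, and the step I expect to be the main obstacle, is right distributivity, $K_M\!\left(\sum_{j}L_j\right)=\sum_j K_M L_j$: the inclusion $\supseteq$ is clear, but $\subseteq$ requires decomposing an arbitrary $f\in\Hom_R(M,\sum_j L_j)$. Here I would invoke the hypothesis in its categorical form, namely that $M$ is projective in $\sigma[M]$. Applied to the decomposition $N=\sum_{j=1}^k Rn_j$, each $Rn_j$ lies in $\sigma[M]$, hence so does $\bigoplus_{j=1}^k Rn_j$, and the canonical map $q\colon\bigoplus_{j=1}^k Rn_j\to N$ is an epimorphism in $\sigma[M]$. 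Projectivity of $M$ lets me lift any $f\colon M\to N$ through $q$ to $\tilde f=(g_1,\dots,g_k)$ with $g_j\in\Hom_R(M,Rn_j)$ and $f=\sum_j g_j$; consequently $f(K)\subseteq\sum_j K_M(Rn_j)$, which is the required inclusion.

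With both distributivities and associativity in hand, the displayed expansion holds, and since local nilpotency annihilates each length-$\ell$ product of the cyclic submodules $Rn_i$, we conclude $N^\ell=0$, so $N$ is nilpotent. Several of the needed formal properties of ${}_M{}$, including distributivity, are collected in \cite[Proposition 1.3]{PepeGab}, so in the final write-up these could be cited rather than reproved. Finite generation of $N$ is used twice and is essential: it provides the finite set to which the definition of local nilpotency applies, and it keeps the expansion of $N^\ell$ a finite sum indexed by $\{1,\dots,k\}^\ell$, so that the uniform bound $\ell$ actually kills every term.
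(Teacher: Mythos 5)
Your proof is correct, but it takes a genuinely different route from the paper's. You apply the definition of local nilpotency once, to the full generating set $\{n_1,\dots,n_k\}$, obtain a single uniform bound $\ell$, and then expand $N^\ell$ completely into length-$\ell$ products of the cyclic submodules $Rn_{i}$ via associativity and distributivity; your care with right distributivity --- lifting $f\colon M\to\sum_j Rn_j$ through the epimorphism $\bigoplus_j Rn_j\to\sum_j Rn_j$ using projectivity in $\sigma[M]$ --- is exactly where the projectivity hypothesis enters, and that step is sound (for a finite sum, $M$ being $M^{(k)}$-projective already suffices). The paper instead inducts on the number of generators: it reduces to $N=K+Rn_k$ with $K^{\ell_1}=0$ and $(Rn_k)^{\ell_2}=0$ by the inductive hypothesis, takes $\ell>\ell_1+\ell_2$, expands $(K+Rn_k)^\ell$ into products of powers of $K$ and $Rn_k$ (tacitly using the same distributivity you make explicit), and kills each summand by a pigeonhole argument on the exponents together with the absorption property $A_MB\subseteq B$. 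Your version is more direct and avoids the exponent bookkeeping; the paper's version has the side benefit, recorded in the remark following the lemma, that a finite sum of nilpotent submodules is nilpotent --- a statement your argument does not directly yield, since it relies on the uniform bound that local nilpotency supplies for the whole generating set rather than on separate nilpotency exponents for the summands.
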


\begin{proof}
We proceed by induction on the number of generators of $N$. Suppose $N=Rn$. Since $N$ is locally nilpotent,  there exists $\ell>0$ such that the product $\ell$ times of $Rn$  is zero, that is, $N^\ell=(Rn)^\ell=0$. Now suppose that $N=Rn_1+\cdots+Rn_k$ and that the result is valid for any locally nilpotent finitely generated submodule with less than $k$ generators. Put $K=Rn_1+\cdots+Rn_{k-1}$. By induction hypothesis there exist $\ell_1,\ell_2>0$ such that $K^{\ell_1}=0$ and $(Rn_k)^{\ell_2}=0$. Let $\ell>\ell_1+\ell_2$. Then,
\begin{equation}\label{a}
\begin{split}
N^\ell & =(K+Rn_k)^\ell\\
 & =K^\ell+\cdots+{K^{\alpha_1}}_M{(Rn_k)^{\beta_1}}_M\cdots _M{K^{\alpha_h}}_M(Rn_k)^{\beta_h}+\cdots +(Rn_k)^\ell
\end{split}
\end{equation}
where $\alpha_i,\beta_i\geq 0$, $\sum_{i=1}^h\alpha_i+\beta_i=\ell$ and with the convention ${A^0}_MB=B$ and $A_MB^0=A$. Then $\sum_{i=1}^h\alpha_i>\ell_1$ or $\sum_{i=1}^h\beta_i>\ell_2$ since $\ell>\ell_1+\ell_2$. Suppose $\beta=\sum_{i=1}^h\beta_i>\ell_2$. Consider the summand $X={K^{\alpha_1}}_M{(Rn_k)^{\beta_1}}_M\cdots _M{K^{\alpha_h}}_M(Rn_k)^{\beta_h}$ in the expression (\ref{a}). If $\beta_h\neq 0$, then ${K^{\alpha_h}}_M(Rn_k)^{\beta_h}\subseteq (Rn_k)^{\beta_h}$. Hence 
\begin{equation*}
\begin{split}
{K^{\alpha_1}}_M{(Rn_k)^{\beta_1}}_M\cdots _M{K^{\alpha_h}}_M(Rn_k)^{\beta_h} & \subseteq {K^{\alpha_1}}_M{(Rn_k)^{\beta_1}}_M\cdots _M{K^{\alpha_{h-1}}}_M(Rn_k)^{\beta_{h-1}+\beta_h} \\
& \cdots \\
& \subseteq {K^{\alpha_1}}_M(Rn_k)^\beta.
\end{split}
\end{equation*}
Hence $X=0$. If $\beta_h=0$, we have $X={K^{\alpha_1}}_M{(Rn_k)^{\beta_1}}_M\cdots _M{(Rn_k)^{\beta_{h-1}}}_M{K^{\alpha_h}}$. Proceeding as above, $X\subseteq {K^{\alpha_1}}_M{(Rn_k)^\beta}_MK^{\alpha_h}$. Thus, $X=0$. Analogously if $\alpha=\sum_{i=1}^h\alpha_i>\ell_1$. Hence $N^\ell=0$.
\end{proof}

\begin{rem}
	Notice that the proof of Lemma \ref{fgnilp} implies that in a module $M$ which is $M^{(\Lambda)}$-projective for every index set $\Lambda$, the finite sum of nilpotent submodules is nilpotent.
\end{rem}

\begin{lemma}\label{sumlocnil}
Suppose $M$ is $M^{(\Lambda)}$-projective for every index set $\Lambda$. If $N$ and $L$ are locally nilpotent submodules of $M$, then $N+L$ is locally nilpotent.
\end{lemma}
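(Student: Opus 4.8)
The plan is to reduce everything to the finitely generated case already settled in Lemma~\ref{fgnilp}, together with the remark immediately following it. Since local nilpotency is by definition a statement about finite subsets, it suffices to fix an arbitrary finite subset $\{x_1,\dots,x_k\}\subseteq N+L$ and produce a single $\ell>0$ that works for it. First I would write each element as $x_i=n_i+l_i$ with $n_i\in N$ and $l_i\in L$, so that the relevant generators split across the two summands.

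Next I would pass to the finitely generated submodules $N_0=\sum_{i=1}^k Rn_i\leq N$ and $L_0=\sum_{i=1}^k Rl_i\leq L$. The key observation is that local nilpotency is inherited by submodules: any finite subset of $N_0$ is in particular a finite subset of $N$, so the local nilpotency of $N$ forces $N_0$ to be locally nilpotent, and symmetrically $L_0$ is locally nilpotent. Now $N_0$ and $L_0$ are finitely generated and locally nilpotent, so Lemma~\ref{fgnilp} applies and both are nilpotent. By the remark following Lemma~\ref{fgnilp}, a finite sum of nilpotent submodules is nilpotent under our projectivity hypothesis, hence $N_0+L_0$ is nilpotent; I would then choose $\ell>0$ with $(N_0+L_0)^\ell=0$.

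Finally I would close the argument using monotonicity of the module product, namely that $A\subseteq A'$ and $B\subseteq B'$ imply $A_MB\subseteq A'_MB'$, which is immediate from the definition $A_MB=\sum\{f(A)\mid f\in\Hom_R(M,B)\}$. Since each $x_i\in N_0+L_0$, every $\ell$-fold product ${Rx_{i_1}}_M{Rx_{i_2}}_M\cdots{}_M{Rx_{i_\ell}}$ is contained in $(N_0+L_0)^\ell=0$. Thus the chosen $\ell$ witnesses the required vanishing for the subset $\{x_1,\dots,x_k\}$, and as this subset was arbitrary, $N+L$ is locally nilpotent.

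I expect the only substantive point to be the reduction itself: recognizing that, although local nilpotency looks like an infinitary condition, each instance involves only finitely many elements and can therefore be transferred to the finitely generated submodules $N_0$ and $L_0$, where Lemma~\ref{fgnilp} upgrades it to genuine nilpotency. Both the monotonicity of the product and the inheritance of local nilpotency by submodules are routine consequences of the definitions, so the crux is simply assembling these ingredients in the right order.
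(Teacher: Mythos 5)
Your proof is correct, but it takes a different route from the paper's. The paper proves the lemma directly: it fixes the finite subset $\{n_1+l_1,\dots,n_k+l_k\}$, obtains $\ell_N$ and $\ell_L$ from the local nilpotency of $N$ and $L$, takes $\ell>\ell_N+\ell_L$, expands the $\ell$-fold product $R(n_{i_1}+l_{i_1})_M\cdots{}_MR(n_{i_\ell}+l_{i_\ell})$ by distributivity, and kills each summand by the same pigeonhole-and-absorption argument used in the proof of Lemma~\ref{fgnilp}. You instead package that combinatorial core once and for all: you pass to the finitely generated submodules $N_0=\sum Rn_i$ and $L_0=\sum Rl_i$, note that local nilpotency is inherited by submodules, upgrade $N_0$ and $L_0$ to genuinely nilpotent submodules via Lemma~\ref{fgnilp}, invoke the remark following that lemma to conclude that $N_0+L_0$ is nilpotent, and finish by monotonicity of the product. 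All the individual steps check out (the inheritance and monotonicity claims are immediate from the definitions, and the remark is stated in the paper and justified by the proof of Lemma~\ref{fgnilp}, so there is no circularity). What your approach buys is economy and modularity: the expansion-and-absorption argument appears only once, in Lemma~\ref{fgnilp}, and this lemma becomes a short corollary; it also makes transparent that local nilpotency is a finitary condition detectable on finitely generated submodules. What the paper's direct approach buys is independence from the unproved remark --- your argument leans on the assertion that a finite sum of nilpotent submodules is nilpotent, which the paper only gestures at (``the proof of Lemma~\ref{fgnilp} implies\dots''), whereas the paper's proof of the present lemma carries out the needed estimate explicitly.
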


\begin{proof}
Let $\{n_1+l_1,..,n_k+l_k\}\subseteq N+L$. For $\{n_1,...,n_k\}\subseteq N$ there exists $\ell_N>0$ such that any combination of $\ell_N$ elements of $\{n_1,...,n_k\}$ satisfies that ${Rn_{i_1}}_M{Rn_{i_2}}_M\cdots{_MRn_{i_{\ell_N}}}=0$. Analogously, for the subset $\{l_1,...,l_k\}\subseteq  L$, there exists $\ell_L>0$ such that any combination of $\ell_L$ elements of $\{l_1,...,l_k\}$ satisfies that ${Rl_{i_1}}_M{Rl_{i_2}}_M\cdots{_MRl_{i_{\ell_L}}}=0$. Take $\ell>\ell_N+\ell_L$. Then 
\[{R(n_{i_1}+l_{i_1})}_M\cdots{_MR(n_{i_\ell}+l_{i_\ell})}\subseteq\left( {Rn_{i_1}+Rl_{i_1}}\right) _M\cdots _M\left(Rn_{i_\ell}+Rl_{i_\ell}\right)\]
\[= \cdots +{(Rn_{i_1})^{\alpha_1}}_M{(Rl_{i_1})^{\beta_1}}_M{(Rn_{i_2})^{\alpha_2}}_M{(Rl_{i_2})^{\beta_2}}_M\cdots_M{(Rn_{i_\ell})^{\alpha_\ell}}_M{(Rl_{i_\ell})^{\beta_\ell}}+\cdots\]
where $\alpha_j,\beta_j\in\{0,1\}$ with the convention that ${(Rn)^0}_MRl=Rl$ and $Rn_M(Rl)^0=Rn$. Also, these exponents satisfy $\sum_{j=1}^\ell \alpha_j+\beta_j=\ell$.
Let us focus on the summand
\[X={(Rn_{i_1})^{\alpha_1}}_M{(Rl_{i_1})^{\beta_1}}_M{(Rn_{i_2})^{\alpha_2}}_M{(Rl_{i_2})^{\beta_2}}_M\cdots_M{(Rn_{i_\ell})^{\alpha_\ell}}_M{(Rl_{i_\ell})^{\beta_\ell}}\]
Suppose that $\sum_{j=1}^\ell\alpha_j\geq \ell_N$. Since the product ${(Rn_{k-1})^{\beta_{k-1}}}_M(Rl_{k})^{\alpha_{k}}\subseteq (Rl_{k})^{\alpha_{k}}$, 
\[X\subseteq\left( {(Rn_{i_1})^{\alpha_1}}_M{(Rn_{i_2})^{\alpha_2}}_M\cdots_M{(Rn_{i_{\ell_N}})^{\alpha_{\ell_N}}}\right) _M{(Rl_{i_{\ell_N}})^{\beta_{\ell_N}}}_M\cdots_M{(Rn_{i_\ell})^{\alpha_\ell}}_M{(Rl_{i_\ell})^{\beta_\ell}}.\]
Then $X=0$. Analogously, if $\sum_{j=1}^\ell\beta_j\geq \ell_L$, then $X=0$. Because of the choice of $\ell$ it cannot be possible that $\sum_{j=1}^\ell\alpha_j< \ell_N$ and $\sum_{j=1}^\ell\beta_j< \ell_L$. Thus, $N+L$ is locally nilpotent.
\end{proof}

%

\begin{prop}\label{Lfiyrad}
Suppose $M$ is $M^{(\Lambda)}$-projective for every index set $\Lambda$ and let $\mathfrak{L}(M)$ be the sum of all locally nilpotent submodules of $M$. Then $\mathfrak{L}(M)$ is a fully invariant locally nilpotent submodule. Moreover $\mathfrak{L}(M/\mathfrak{L}(M))=0$.
\end{prop}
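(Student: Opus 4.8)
The plan is to establish the three assertions in turn: that $\mathfrak{L}(M)$ is locally nilpotent, that it is fully invariant, and finally that the radical of the quotient $M/\mathfrak{L}(M)$ vanishes. The first two are soft consequences of the lemmas already proved; the real content is in the last statement.

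For local nilpotency, I would note that any finite subset $\{m_1,\dots,m_k\}\subseteq\mathfrak{L}(M)$ is already contained in a finite sum $L_1+\cdots+L_r$ of locally nilpotent submodules, since each $m_j$ lies in such a finite sum. By Lemma \ref{sumlocnil} and induction, $L_1+\cdots+L_r$ is locally nilpotent, which at once supplies the required nilpotency bound for $\{m_1,\dots,m_k\}$. Hence $\mathfrak{L}(M)$ is locally nilpotent. For full invariance the key point is that the image of a locally nilpotent submodule under any $f\in\End_R(M)$ is again locally nilpotent. Using Lemma \ref{fprod} (the identities $f(A)_MB\subseteq A_MB$ and $A_Mf(B)=f(A_MB)$) together with associativity and monotonicity of the product, I would peel the $f$'s off one factor at a time to get $f(A_1)_M\cdots{}_Mf(A_\ell)\subseteq f(A_1{}_M\cdots{}_MA_\ell)$ for all submodules $A_1,\dots,A_\ell$. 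Taking $A_j=Rn_{i_j}$ with the $n_i$ in a locally nilpotent $L$ and $\ell$ the corresponding bound gives $f(Rn_{i_1})_M\cdots{}_Mf(Rn_{i_\ell})\subseteq f(0)=0$; since $f(Rn_i)=Rf(n_i)$, this shows $f(L)$ is locally nilpotent. Thus $f(\mathfrak{L}(M))=\sum_L f(L)\subseteq\mathfrak{L}(M)$, so $\mathfrak{L}(M)$ is fully invariant.

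For the last assertion, write $N=\mathfrak{L}(M)$. Since $N$ is fully invariant, $M/N$ is $(M/N)^{(\Lambda)}$-projective for every $\Lambda$, so $\mathfrak{L}(M/N)$ is defined and, by the first two parts, is a fully invariant locally nilpotent submodule. Because $\mathfrak{L}(M/N)$ is the sum of its cyclic locally nilpotent submodules, it suffices to prove that every cyclic locally nilpotent $R\bar{k}\leq M/N$ is zero. For such $\bar{k}$, Lemma \ref{fgnilp} applied in $M/N$ makes $R\bar{k}$ nilpotent, say $(R\bar{k})^p=0$, and Lemma \ref{epiproduct} then gives $(Rk)^p\subseteq N$. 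The goal is to upgrade this to the statement that $Rk$ itself is locally nilpotent in $M$, for then $Rk\subseteq\mathfrak{L}(M)=N$, whence $\bar{k}=0$ and $\mathfrak{L}(M/N)=0$.

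This upgrade is the main obstacle, and it is precisely the module analogue of ``locally nilpotent over locally nilpotent is locally nilpotent.'' In the ring case one has the clean Levitzki argument: the finitely many length-$p$ products of the generators are single elements of the locally nilpotent ideal, so local nilpotency is applied to them directly. Here, by contrast, a length-$p$ product $Rk_{i_1}{}_M\cdots{}_MRk_{i_p}$ is a submodule of $N$ that is typically not finitely generated, so it cannot be fed into the local nilpotency of $N$ as a finite set of elements; and grouping a long product into blocks lying in $N$ fails for the same reason, since products of (infinitely generated) submodules of a merely locally—rather than globally—nilpotent $N$ need not vanish. To get around this I would reduce to the cyclic situation and exploit the nil-submodule machinery. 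Using Lemma \ref{factornil} together with the observation that a locally nilpotent submodule is a nil-submodule, one first checks that $Rk$ is a nil-submodule of $M$: for $f\in\Hom_R(M,Rk)$ the induced endomorphism of $M/N$ is nilpotent, so $f^n(M)\subseteq N$, and then $f^{2n}(M)\subseteq f^n(Rk)=Rf^n(k)$ with $f^n(k)\in N$; since $N$ is a nil-submodule this forces $f^{2n}$, and hence $f$, to be nilpotent. One then tracks the trajectories $k,f(k),f^2(k),\dots$ that these maps produce inside $N$ and applies Lemma \ref{fgnilp} to the finitely generated submodules of $N$ they generate. Making such a bound uniform over all the maps involved is the delicate point on which the whole argument turns.
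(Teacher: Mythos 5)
Your treatment of the first two assertions is correct and essentially identical to the paper's: local nilpotency of $\mathfrak{L}(M)$ follows by applying Lemma \ref{sumlocnil} to a finite sum of locally nilpotent submodules containing a given finite subset, and full invariance follows by peeling the endomorphism off the product one factor at a time via Lemma \ref{fprod}. The problem is the third assertion, which you do not actually prove. You correctly reduce to showing that a cyclic $R\bar{k}\leq \mathfrak{L}(M/\mathfrak{L}(M))$ is zero, and via Lemmas \ref{fgnilp} and \ref{epiproduct} you reach $(Rk)^p\subseteq \mathfrak{L}(M)$; but the passage from there to ``$Rk$ is locally nilpotent in $M$'' --- the only step with real content --- is left open. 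Your intermediate observation that $Rk$ is a nil-submodule of $M$ is correct and nicely argued, but a nil-submodule need not be locally nilpotent without further hypotheses (that implication is exactly Proposition \ref{accnillocnil}, which requires acc on annihilators, not available here), so it does not close the argument. Ending with ``making such a bound uniform over all the maps involved is the delicate point on which the whole argument turns'' is an acknowledgement that the proof is unfinished: no uniform bound is produced. Note also that you could not invoke semiprimeness of $\mathfrak{L}(M)$ (Corollary \ref{Lsp}) to absorb $(Rk)^p\subseteq\mathfrak{L}(M)$, since that corollary is itself deduced from the statement you are proving.

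For comparison, the paper's own proof of the third assertion is a one-line appeal to Lemma \ref{epiproduct}: from the nilpotency of $\pi(Rx)$ it obtains $(Rx)^p\subseteq\mathfrak{L}(M)$ and simply concludes. So the difficulty you isolate --- that $(Rx)^p$ is in general an infinitely generated submodule of the merely locally (not globally) nilpotent $\mathfrak{L}(M)$, so the Levitzki-style argument of feeding the finitely many length-$p$ products of generators back into local nilpotency does not transfer from the ring case --- is genuine and is not addressed in the paper either; you have diagnosed the weak point more candidly than the source. But diagnosis is not proof: as written, your argument establishes the first two claims and leaves $\mathfrak{L}(M/\mathfrak{L}(M))=0$ unproved.
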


\begin{proof}
It follows from Lemma \ref{sumlocnil} that $\mathfrak{L}(M)$ is locally nilpotent. Let $f\in\End_R(M)$ be any endomorphism and let $\{f(m_1),...,f(m_k)\}$ be any subset of $f(\mathfrak{L}(M))$. Since $\mathfrak{L}(M)$ is locally nilpotent and $\{m_1,...,m_k\}\subseteq \mathfrak{L}(M)$, there exists $\ell>0$ such that any combination of $\ell$ elements of $\{m_1,...,m_k\}$ satisfies that ${Rm_{i_1}}_M{Rm_{i_2}}_M\cdots{_MRm_{i_{\ell}}}=0$. Then, by Lemma \ref{fprod},
\[{Rf(m_{i_1})}_M\left( {Rf(m_{i_2})} _M\cdots{_MRf(m_{i_{\ell}})}_MRf(m_{i_{\ell+1}})\right) \]
\[\subseteq {Rm_{i_1}}_M\left( {Rf(m_{i_2})} _M\cdots{_MRf(m_{i_{\ell}})}_MRf(m_{i_{\ell+1}})\right)\]
\[\subseteq {Rm_{i_1}}_M {Rm_{i_2}} _M\cdots{_MRf(m_{i_{\ell}})}_MRf(m_{i_{\ell+1}})\]
\[\subseteq\cdots\subseteq {Rm_{i_1}}_M{Rm_{i_2}} _M\cdots{_MRm_{i_{\ell}}} _MRf(m_{i_{\ell+1}})=0.\]

Therefore, $f(\mathfrak{L}(M))\subseteq \mathfrak{L}(M)$. Thus, $\mathfrak{L}(M)$ is fully invariant. Now, let $\pi:M\to M/\mathfrak{L}(M)$ be the canonical projection and $x+\mathfrak{L}(M)\in M/\mathfrak{L}(M)$. Consider $R(x+\mathfrak{L}(M))=\pi(Rx)$. It follows from Lemma \ref{epiproduct}, that $\mathfrak{L}(M/\mathfrak{L}(M))=0$.
\end{proof}

\begin{cor}\label{Lsp}
Suppose $M$ is $M^{(\Lambda)}$-projective for every index set $\Lambda$. Then, $\mathfrak{L}(M)$ is a semiprime submodule of $M$.
\end{cor}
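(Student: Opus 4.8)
The goal is to show that $\mathfrak{L}(M)$ is a semiprime submodule, meaning that whenever $N$ is a fully invariant submodule of $M$ with $N_MN \subseteq \mathfrak{L}(M)$, we must have $N \subseteq \mathfrak{L}(M)$. By Proposition \ref{Lfiyrad} we already know that $\mathfrak{L}(M)$ is fully invariant, so it is a legitimate candidate to be a semiprime submodule, and the natural strategy is to pass to the quotient $M/\mathfrak{L}(M)$ and exploit the fact that it has no nonzero locally nilpotent submodules.

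The plan is as follows. First I would set $\pi:M\to M/\mathfrak{L}(M)$ to be the canonical projection. Since $\mathfrak{L}(M)$ is fully invariant, the quotient $M/\mathfrak{L}(M)$ is again $(M/\mathfrak{L}(M))^{(\Lambda)}$-projective for every $\Lambda$ by the fact cited in the Preliminaries, so Lemma \ref{epiproduct} applies. Given a fully invariant $N\leq M$ with $N_MN\subseteq\mathfrak{L}(M)$, I would apply $\pi$ and use Lemma \ref{epiproduct} to compute $\pi(N)_{M/\mathfrak{L}(M)}\pi(N)=\pi(N_MN)=0$. Thus in the quotient the image $\bar N=\pi(N)$ satisfies $\bar N_{M/\mathfrak{L}(M)}\bar N=0$, i.e. $\bar N$ is a nilpotent (indeed square-zero) submodule.

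Next I would observe that a nilpotent submodule is locally nilpotent (this is recorded in the remark following the definition of locally nilpotent submodule). Hence $\bar N$ is a locally nilpotent submodule of $M/\mathfrak{L}(M)$. But Proposition \ref{Lfiyrad} tells us precisely that $\mathfrak{L}(M/\mathfrak{L}(M))=0$, and $\mathfrak{L}$ of a module is the sum of all its locally nilpotent submodules, so every locally nilpotent submodule of $M/\mathfrak{L}(M)$ is contained in $\mathfrak{L}(M/\mathfrak{L}(M))=0$. Therefore $\bar N=0$, which means $\pi(N)=0$, i.e. $N\subseteq\Ker\pi=\mathfrak{L}(M)$. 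This is exactly the defining condition for $\mathfrak{L}(M)$ to be a semiprime submodule, completing the argument.

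The argument is short and the only real subtlety I anticipate is verifying that Lemma \ref{epiproduct} is genuinely applicable: it requires $M$ to be quasi-projective and $\mathfrak{L}(M)$ to be fully invariant, both of which are in hand (quasi-projectivity follows from the standing hypothesis, and full invariance from Proposition \ref{Lfiyrad}). One should also make sure the product in the quotient is being taken in the correct module, namely $M/\mathfrak{L}(M)$, so that the identification $\pi(N_MN)=\pi(N)_{M/\mathfrak{L}(M)}\pi(N)$ is the one supplied by Lemma \ref{epiproduct}. Beyond this bookkeeping there is no real obstacle; the content has been front-loaded into Proposition \ref{Lfiyrad} and Lemma \ref{epiproduct}, and this corollary is essentially a clean repackaging of those two facts.
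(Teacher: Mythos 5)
Your proof is correct and follows essentially the same route as the paper: both pass to $M/\mathfrak{L}(M)$, observe that a square-zero submodule there is locally nilpotent and hence contained in $\mathfrak{L}(M/\mathfrak{L}(M))=0$, and conclude. The only difference is cosmetic: the paper first deduces that $M/\mathfrak{L}(M)$ is a semiprime \emph{module} and then invokes an external result to transfer this back to $\mathfrak{L}(M)$ being a semiprime \emph{submodule}, whereas you unfold the definition directly via Lemma \ref{epiproduct} and Proposition \ref{Lfiyrad}, which makes the argument slightly more self-contained.
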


\begin{proof}
Let $K/\mathfrak{L}(M)$ be a submodule of $M/\mathfrak{L}(M)$ such that $\left(K/\mathfrak{L}(M) \right) ^2=0$. Hence $K/\mathfrak{L}(M)$ is nilpotent and so $K/\mathfrak{L}(M)\subseteq \mathfrak{L}(M/\mathfrak{L}(M))=0$. Thus, $M/\mathfrak{L}(M)$ is a semiprime module which implies that $\mathfrak{L}(M)$ is a semiprime submodule of $M$ by \cite[Proposition 13]{raggisemiprime}.
\end{proof}


\begin{defn}
Let $M$ be a module. The \emph{prime radical} of $M$ is defined as the intersection of all prime submodules of $M$. 
\end{defn}

If $M$ is $M^{(\Lambda)}$-projective for every index set $\Lambda$, in \cite[22.3]{wisbauerfoundations} it is shown that $\Rad(M)\neq M$, that is, $M$ has maximal submodules. By \cite[Corollary 4.11 and Example 4.14]{medinageneralization} each maximal submodule of $M$ contains a prime submodule, hence $Spec(M)\neq\emptyset$ and so the prime radical of $M$ is a proper submodule.

\begin{cor}\label{nesL}
Suppose $M$ is $M^{(\Lambda)}$-projective for every index set $\Lambda$. Let $N$ be the prime radical of $M$. Then,  $N=\mathfrak{L}(M)$.
\end{cor}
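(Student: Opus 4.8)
The plan is to prove the equality $N = \mathfrak{L}(M)$ by establishing two inclusions, each using a different one of the structural results already assembled. For the inclusion $\mathfrak{L}(M) \subseteq N$, I would show that every locally nilpotent submodule is contained in every prime submodule. Let $P$ be any prime submodule of $M$ and let $L$ be a locally nilpotent submodule. Since $P$ is in particular semiprime, and since the prime radical is the intersection over all such $P$, it suffices to see $L \subseteq P$. The idea is to reduce to the cyclic case: for $n \in L$, the submodule $Rn$ is nilpotent, say $(Rn)^\ell = 0 \leq P$, and nilpotency together with the semiprime (indeed prime) condition forces $Rn \leq P$. Concretely, from $(Rn)^\ell \leq P$ one peels off factors using primeness applied to the fully invariant powers; the subtlety is that $Rn$ need not be fully invariant, so I would instead work with the fully invariant closure or apply the semiprime characterization directly, concluding $Rn \leq P$ for each $n$ and hence $L \leq P$.

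For the reverse inclusion $N \subseteq \mathfrak{L}(M)$, I would exploit Corollary \ref{Lsp}, which says $\mathfrak{L}(M)$ is itself a semiprime submodule. The key is that the prime radical equals the intersection of all prime submodules, and I want to show this intersection lies inside $\mathfrak{L}(M)$. The natural route is to pass to the quotient $\overline{M} = M/\mathfrak{L}(M)$, which is $\overline{M}^{(\Lambda)}$-projective for every $\Lambda$ (since $\mathfrak{L}(M)$ is fully invariant, by the cited \cite[Lemma 9]{vanmodules}) and is semiprime with $\mathfrak{L}(\overline{M}) = 0$ by Proposition \ref{Lfiyrad}. Over a semiprime module, the standard theory developed in the references expresses $\mathfrak{L}(M)$ as the intersection of the minimal prime submodules, or more directly shows that in a semiprime module the prime radical is zero. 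I would then lift this back: the prime submodules of $M$ containing $\mathfrak{L}(M)$ correspond to the prime submodules of $\overline{M}$, so their intersection is $\mathfrak{L}(M)$ plus the preimage of the prime radical of $\overline{M}$, which is zero.

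The main obstacle I anticipate is the bookkeeping in the first inclusion, specifically handling the fact that $Rn$ is cyclic but not necessarily fully invariant, so the primeness hypothesis (stated for fully invariant submodules) does not apply verbatim to $Rn$. The clean fix is to invoke the semiprime characterization: since $\mathfrak{L}(M)$ is semiprime by Corollary \ref{Lsp} and every prime submodule is semiprime, I can use that a nilpotent submodule is killed by passing into any semiprime submodule, which is exactly the content one extracts from \cite[Proposition 13]{raggisemiprime} together with the remark that nilpotent implies locally nilpotent. Thus both inclusions ultimately rest on the semiprime theory already cited, and the role of Proposition \ref{Lfiyrad} is to guarantee that $\mathfrak{L}(M)$ sits at the correct place in the radical hierarchy. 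I would therefore organize the argument so that Corollary \ref{Lsp} supplies $N \subseteq \mathfrak{L}(M)$ directly (as $\mathfrak{L}(M)$ is a semiprime submodule, it contains the prime radical only if primes lie above it, so care is needed in the direction), and the locally-nilpotent-into-every-prime argument supplies $\mathfrak{L}(M) \subseteq N$.
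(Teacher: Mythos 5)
Your proposal is correct and follows essentially the same route as the paper: the inclusion $N\subseteq\mathfrak{L}(M)$ comes from Corollary \ref{Lsp} together with the fact that a semiprime submodule contains the prime radical (the paper cites \cite[Proposition 1.11]{maugoldie} for this, where you sketch the equivalent quotient argument), and $\mathfrak{L}(M)\subseteq N$ comes from observing that each cyclic $Rl$ with $l\in\mathfrak{L}(M)$ is nilpotent and that the semiprime submodule $N$ must contain it. Your worry about $Rn$ not being fully invariant is legitimate and is resolved exactly as you suggest, by the characterization of semiprime submodules in \cite{raggisemiprime} that applies to arbitrary submodules when $M$ is projective in $\sm$.
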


\begin{proof}
It follows from Corollary \ref{Lsp}, that $\mathfrak{L}(M)$ is a semiprime submodule. Hence, $N\subseteq \mathfrak{L}(M)$ by \cite[Proposition 1.11]{maugoldie}. Now, let $l\in \mathfrak{L}(M)$. Since $\mathfrak{L}(M)$ is a locally nilpotent submodule, $Rl$ is nilpotent. Since $N$ is a semiprime submodule, $l\in N$. Thus, $\mathfrak{L}(M)\subseteq N$.
\end{proof}


\begin{cor}\label{prnilnet}
Let $M$ be a quasi-projective module. If $M$ is Noetherian, then the prime radical of $M$ is nilpotent.
\end{cor}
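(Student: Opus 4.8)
The plan is to reduce the statement to the machinery already built in this section, namely Corollary \ref{nesL}, Proposition \ref{Lfiyrad}, and Lemma \ref{fgnilp}. The crucial preliminary observation is that the hypotheses, though stated only as ``quasi-projective'' and ``Noetherian'', are in fact strong enough to unlock all the earlier results: since $M$ is Noetherian it is finitely generated, and a finitely generated quasi-projective module is $M^{(\Lambda)}$-projective for every index set $\Lambda$ (as recorded in the Preliminaries via \cite[18.2]{wisbauerfoundations}). This upgrade is what makes the whole toolkit applicable.

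First I would invoke Corollary \ref{nesL} to identify the prime radical $N$ of $M$ with $\mathfrak{L}(M)$, the sum of all locally nilpotent submodules. By Proposition \ref{Lfiyrad}, this submodule $\mathfrak{L}(M)$ is itself locally nilpotent (and fully invariant), so the prime radical $N$ is a single locally nilpotent submodule rather than merely a sum or intersection of such objects.

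Next I would use the Noetherian hypothesis a second time: every submodule of $M$ is finitely generated, so in particular $\mathfrak{L}(M)$ is finitely generated. At this point Lemma \ref{fgnilp}, which promotes a finitely generated locally nilpotent submodule to a nilpotent one under exactly the $M^{(\Lambda)}$-projectivity we have secured, applies directly and yields that $N=\mathfrak{L}(M)$ is nilpotent.

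The proof is thus a short chain of previously established facts, and I do not expect a serious obstacle. The only step warranting care is the opening remark that finitely generated plus quasi-projective forces $M^{(\Lambda)}$-projectivity for all $\Lambda$; everything downstream, from the identification $N=\mathfrak{L}(M)$ to the final application of Lemma \ref{fgnilp}, depends on this observation, so it must be made explicit before the earlier results are cited.
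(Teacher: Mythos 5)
Your proposal is correct and follows exactly the route the paper intends: the paper's own proof is the one-line citation of Lemma \ref{fgnilp} and Corollary \ref{nesL}, and your argument simply supplies the details, including the worthwhile explicit remark that Noetherian plus quasi-projective yields $M^{(\Lambda)}$-projectivity for all $\Lambda$ so that those results apply.
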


\begin{proof}
It follows from Lemma \ref{fgnilp} and Corollary \ref{nesL}.
\end{proof}

\begin{example}
	Let $n,p$ be integers with $p$ prime. Then, the $\mathbb{Z}$-module $\mathbb{Z}/p^n\mathbb{Z}$ is quasi-projective. It follows from Corollary \ref{prnilnet} that $\mathfrak{L}(\mathbb{Z}/p^n\mathbb{Z})=\mathbb{Z}/p^{n-1}\mathbb{Z}$ is a nilpotent submodule.
\end{example}

\begin{lemma}\label{Lsumas}
Suppose $M$ is $M^{(\Lambda)}$-projective for every index set $\Lambda$. If $M=\bigoplus_{i\in I} M_i$, then $\mathfrak{L}(M)=\bigoplus_{i\in I}\mathfrak{L}(M_i)$.
\end{lemma}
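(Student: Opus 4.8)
The plan is to prove the two inclusions separately, with the forward inclusion $\bigoplus_{i\in I}\mathfrak{L}(M_i)\subseteq\mathfrak{L}(M)$ being the easier one and the reverse inclusion carrying the real content. For the forward inclusion, it suffices to show that each $\mathfrak{L}(M_i)$ is a locally nilpotent submodule of $M$ (not merely of $M_i$), since $\mathfrak{L}(M)$ is the sum of \emph{all} locally nilpotent submodules of $M$ and contains every such summand; the full direct sum then lands inside $\mathfrak{L}(M)$ because an arbitrary element of $\bigoplus_{i\in I}\mathfrak{L}(M_i)$ involves only finitely many coordinates, and a finite sum of locally nilpotent submodules is again locally nilpotent by Lemma \ref{sumlocnil}. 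To see that $\mathfrak{L}(M_i)$ is locally nilpotent as a submodule of $M$, I would use Lemma \ref{proddirsumm}: since each $M_i$ is a direct summand of $M$, for submodules $K,L\leq M_i$ we have $K_ML=K_{M_i}L$, so the products ${Rn_{i_1}}_M\cdots{_MRn_{i_\ell}}$ computed in $M$ coincide with those computed in $M_i$, and local nilpotency transfers verbatim.

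For the reverse inclusion $\mathfrak{L}(M)\subseteq\bigoplus_{i\in I}\mathfrak{L}(M_i)$, let $\pi_j:M\to M_j$ denote the canonical projection onto the $j$-th summand. Since $\mathfrak{L}(M)$ is fully invariant by Proposition \ref{Lfiyrad} and each $M_j$ is a direct summand, I expect $\pi_j(\mathfrak{L}(M))\subseteq\mathfrak{L}(M)\cap M_j$; more to the point, I want to show $\pi_j(\mathfrak{L}(M))\subseteq\mathfrak{L}(M_j)$. The key observation is that $\pi_j|_{M_j}$ is the identity and $\pi_j$ restricted to the other summands is zero, so I can compare products in $M$ with products in $M_j$ via Lemma \ref{proddirsumm} again. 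Concretely, given a finite subset $\{x_1,\dots,x_k\}\subseteq\mathfrak{L}(M)$ with $\ell$ a local-nilpotency bound, the elements $\pi_j(x_1),\dots,\pi_j(x_k)$ lie in $M_j$, and I would argue that the vanishing of the length-$\ell$ $M$-products forces the vanishing of the corresponding $M_j$-products, using that $\pi_j$ is idempotent together with Lemma \ref{fprod} (which lets endomorphisms be pushed through the product). Since $x=\sum_j\pi_j(x)$ is a finite sum, establishing $\pi_j(x)\in\mathfrak{L}(M_j)$ for each relevant $j$ gives $x\in\bigoplus_j\mathfrak{L}(M_j)$.

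The main obstacle I anticipate is the reverse inclusion, specifically the passage from an element of $\mathfrak{L}(M)$ to its projections being \emph{locally nilpotent in the summand}, rather than merely nilpotent coordinatewise. Local nilpotency is a condition on finite tuples, and projecting a tuple $\{x_1,\dots,x_k\}$ requires controlling all mixed products ${R\pi_j(x_{i_1})}_{M_j}\cdots{_{M_j}R\pi_j(x_{i_\ell})}$ simultaneously. The delicate point is that knowing $\mathfrak{L}(M)$ is locally nilpotent gives control over products of the $x_i$ in $M$, and I must convert these into products of the $\pi_j(x_i)$ in $M_j$; the identity $K_{M_j}L=K_ML$ for $K,L\leq M_j$ from Lemma \ref{proddirsumm} is exactly what bridges this, provided I first verify that $\pi_j(Rx)=R\pi_j(x)\leq M_j$ and that applying $\pi_j$ to a product computed in $M$ yields the product of the projected cyclic submodules computed in $M_j$. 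Handling the bookkeeping of these projected products across a single uniform bound $\ell$ is the step requiring the most care.
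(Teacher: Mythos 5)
Your proposal is correct and follows the paper's proof essentially verbatim: both inclusions rest on Lemma \ref{proddirsumm} (products computed in a direct summand agree with products computed in $M$) together with the full invariance of $\mathfrak{L}(M)$ from Proposition \ref{Lfiyrad}. The obstacle you flag in the reverse inclusion dissolves once you use the containment $\pi_j(\mathfrak{L}(M))\subseteq\mathfrak{L}(M)\cap M_j$ that you already stated: the projected elements form a locally nilpotent submodule of $M$ sitting inside the summand $M_j$, so Lemma \ref{proddirsumm} transfers local nilpotency to $M_j$ directly, and there is no need to push $\pi_j$ through products of the original $x_i$ via Lemma \ref{fprod}.
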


\begin{proof}
It follows from Lemma \ref{proddirsumm} that $\mathfrak{L}(M_i)\subseteq \mathfrak{L}(M)$ for all $i\in I$. Let $\pi_j:M\to M_j$ be the canonical projection. Since $M_j\leq M$, we can see $\pi$ as an endomorphism of $M$. Hence $\pi(\mathfrak{L}(M))\subseteq M_j\cap \mathfrak{L}(M)$ by Proposition \ref{Lfiyrad}. Now, let $X\leq M$ be a locally nilpotent submodule such that $X\leq M_j$. Since $M_j$ is a direct summand of $M$, $X\leq \mathfrak{L}(M_j)$ by Lemma \ref{proddirsumm}. Therefore $\pi(\mathfrak{L}(M))\leq \mathfrak{L}(M_j)$. Hence $\pi(\mathfrak{L}(M))\leq \mathfrak{L}(M_i)$ for all $i\in I$. It follows that $\mathfrak{L}(M)\subseteq \bigoplus_{i\in I} \mathfrak{L}(M_i)$. Thus, $\mathfrak{L}(M)=\bigoplus_{i\in I} \mathfrak{L}(M_i)$.
\end{proof}

\begin{prop}
Suppose $M$ is $M^{(\Lambda)}$-projective for every index set $\Lambda$ and that $M=\bigoplus_{i\in I}M_i$. Then, $M$ is semiprime if and only if $M_i$ is a semiprime module for all $i\in I$.
\end{prop}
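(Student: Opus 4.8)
The plan is to translate everything into the operator $\mathfrak{L}$ and then let Lemma~\ref{Lsumas} do the work. The first thing I would record is the equivalence, valid for any module $N$ which is $N^{(\Lambda)}$-projective for every index set $\Lambda$, that $N$ is semiprime if and only if $\mathfrak{L}(N)=0$. Indeed, by Corollary~\ref{nesL} the submodule $\mathfrak{L}(N)$ is precisely the prime radical of $N$, and by Corollary~\ref{Lsp} it is a semiprime submodule. If $N$ is semiprime then $0$ is a semiprime submodule, so the prime radical, being contained in every semiprime submodule by \cite[Proposition 1.11]{maugoldie}, must be $0$; conversely, if $\mathfrak{L}(N)=0$ then $0=\mathfrak{L}(N)$ is a semiprime submodule, i.e.\ $N$ is semiprime.

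Before applying this equivalence to the summands, I must check that each $M_i$ again satisfies the standing hypothesis that it is $M_i^{(\Lambda)}$-projective for every $\Lambda$; this is exactly what licenses the use of Corollaries~\ref{Lsp} and \ref{nesL} for $M_i$. Since $M_i$ is a direct summand of $M$ and $M$ is projective in $\sm$, the module $M_i$ is projective in $\sm$ as well (a direct summand of a projective object is projective). Now $\sigma[M_i]$ is a full subcategory of $\sm$ containing $M_i$, and every epimorphism in $\sigma[M_i]$ is in particular an epimorphism in $\sm$; hence projectivity of $M_i$ in $\sm$ forces projectivity of $M_i$ in $\sigma[M_i]$, which is to say that $M_i$ is $M_i^{(\Lambda)}$-projective for every $\Lambda$.

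With both points in place the argument closes immediately. By Lemma~\ref{Lsumas} we have $\mathfrak{L}(M)=\bigoplus_{i\in I}\mathfrak{L}(M_i)$, so $\mathfrak{L}(M)=0$ if and only if $\mathfrak{L}(M_i)=0$ for every $i\in I$. Feeding this into the equivalence of the first paragraph, applied to $M$ and to each $M_i$, I conclude that $M$ is semiprime if and only if $\mathfrak{L}(M)=0$, if and only if $\mathfrak{L}(M_i)=0$ for all $i$, if and only if each $M_i$ is semiprime.

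I expect the one genuinely non-formal step to be the descent of projectivity from $\sm$ to $\sigma[M_i]$ for each summand: without it, the characterization ``$\mathfrak{L}(M_i)=0$ if and only if $M_i$ is semiprime'' is not available for the pieces, and the right-hand side of the claimed equivalence could not be reached. Everything else is bookkeeping layered on Lemma~\ref{Lsumas}.
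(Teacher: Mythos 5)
Your proof is correct and follows the same route as the paper: the paper's own argument is just the single application of Lemma~\ref{Lsumas} together with the (implicit) equivalence ``semiprime $\Leftrightarrow$ $\mathfrak{L}=0$''. You have merely made explicit the two points the paper leaves tacit, namely that this equivalence follows from Corollaries~\ref{Lsp} and~\ref{nesL}, and that each summand $M_i$ inherits the projectivity hypothesis needed to invoke them.
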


\begin{proof}
It follows from Lemma \ref{Lsumas} that 
\[\mathfrak{L}(M)=\mathfrak{L}\left(\bigoplus_{i\in I} M_i \right)=\bigoplus_{i\in I} \mathfrak{L}(M_i).\]
Thus, $\mathfrak{L}(M)=0$ if and only if $\mathfrak{L}(M_i)=0$ for all $i\in I$.
\end{proof}

\begin{cor}\label{rsp}
The following conditions are equivalent for a ring $R$.
\begin{enumerate}[label=\emph{(\alph*)}]
\item $R$ is a semiprime ring.
\item Every free left (right) $R$-module is semiprime.
\item Every projective left (right) $R$-module is semiprime.
\end{enumerate}
\end{cor}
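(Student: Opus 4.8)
The plan is to prove the cycle $(a)\Rightarrow(c)\Rightarrow(b)\Rightarrow(a)$. The two tools are Lemma \ref{Lsumas} and the Proposition immediately preceding this corollary, both of which require that the module under consideration be $M^{(\Gamma)}$-projective for every index set $\Gamma$. I would first observe that this hypothesis is automatic for free and for projective modules, since each is a projective object of the full category of $R$-modules and is therefore $N$-projective for every $N$, in particular $M^{(\Gamma)}$-projective for every $\Gamma$. I would also record the standard left--right symmetric fact that, for $M={}_RR$, the fully invariant submodules are precisely the two-sided ideals and $N_MK$ coincides with the usual ideal product $NK$; consequently $R$ is a semiprime ring if and only if $\mathfrak{L}({}_RR)=0$, i.e.\ if and only if ${}_RR$ is semiprime as a module. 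Throughout I use that a module $M$ is semiprime exactly when $\mathfrak{L}(M)=0$, which follows from Corollaries \ref{Lsp} and \ref{nesL}.

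For $(a)\Rightarrow(c)$, assume $R$ is semiprime, so that $\mathfrak{L}({}_RR)=0$. I would treat free modules first: for $F=R^{(\Lambda)}$, Lemma \ref{Lsumas} gives $\mathfrak{L}(F)=\bigoplus_{\lambda\in\Lambda}\mathfrak{L}(R)=0$, so $F$ is semiprime. Now let $P$ be an arbitrary projective module and choose a free module $F$ with $F=P\oplus Q$. Applying Lemma \ref{Lsumas} to this decomposition yields $\mathfrak{L}(P)\oplus\mathfrak{L}(Q)=\mathfrak{L}(F)=0$, whence $\mathfrak{L}(P)=0$ and $P$ is semiprime.

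The remaining implications are immediate. Every free module is projective, which gives $(c)\Rightarrow(b)$. For $(b)\Rightarrow(a)$, the module ${}_RR$ is free, hence semiprime by hypothesis, so $R$ is a semiprime ring by the opening observation. Since being a semiprime ring is left--right symmetric and every step above runs verbatim over right modules, the ``(right)'' forms of (b) and (c) are equivalent to (a) as well.

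I do not anticipate a real obstacle; the work lies in assembling the existing machinery. The one point deserving care is that $\mathfrak{L}$ of a direct summand, computed intrinsically inside that summand, agrees with its role inside the ambient free module---this is guaranteed by Lemma \ref{proddirsumm} (for a summand $P$ one has $K_PL=K_FL$ whenever $K,L\leq P$) and is already incorporated into Lemma \ref{Lsumas}, so invoking that lemma for $F=P\oplus Q$ is legitimate. The only other thing to confirm is the self-projectivity of free and projective modules, which is immediate from their projectivity in $R$-Mod.
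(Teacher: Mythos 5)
Your proof is correct and follows exactly the route the paper intends: the corollary is stated without proof as an immediate consequence of Lemma \ref{Lsumas} and the preceding proposition, together with the identification of the product $N_{R}K$ with the usual ideal product for $M={}_RR$, and your argument (including the reduction of the projective case to a free module via a direct summand, as in the written-out proof of the corollary that follows) supplies precisely these details. No gaps.
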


\begin{cor}
	The following conditions are equivalent for a ring $R$:
	\begin{enumerate}[label=\emph{(\alph*)}]
		\item The prime radical of $R$ is nilpotent.
		\item The prime radical of every left (right) free $R$-module is nilpotent.
		\item The prime radical of every left (right) projective $R$-module is nilpotent.
	\end{enumerate}
\end{cor}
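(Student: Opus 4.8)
The plan is to reduce all three conditions to the single operator $\mathfrak{L}(\_)$ and then close a short cycle of implications. By Corollary~\ref{nesL} the prime radical of any module that is $M^{(\Lambda)}$-projective for every $\Lambda$ equals $\mathfrak{L}(M)$; since free modules, projective modules, and $R$ itself are projective, hence $M^{(\Lambda)}$-projective for every index set $\Lambda$, this applies to every module occurring in (a)--(c). For $M={}_RR$ the fully invariant submodules are precisely the two-sided ideals and the module product $N_RK$ is the ordinary ideal product $NK$ (because each $f\in\Hom_R({}_RR,K)$ has the form $r\mapsto rf(1)$), so the prime radical of the ring $R$ is exactly $\mathfrak{L}({}_RR)$ and nilpotency has the same meaning in both pictures. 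Thus (a)--(c) say that $\mathfrak{L}({}_RR)$, that $\mathfrak{L}(F)$ for all free $F$, and that $\mathfrak{L}(P)$ for all projective $P$ are nilpotent, respectively. The implications (c)$\Rightarrow$(b) and (b)$\Rightarrow$(a) are then trivial, as free modules are projective and ${}_RR$ is free; the content is in (a)$\Rightarrow$(b) and (b)$\Rightarrow$(c).

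For (a)$\Rightarrow$(b) I would set $L=\mathfrak{L}({}_RR)$ and $F=R^{(\Lambda)}$. Lemma~\ref{Lsumas} gives $\mathfrak{L}(F)=\bigoplus_\lambda\mathfrak{L}(R_\lambda)=L^{(\Lambda)}$, the finitely supported families with every coordinate in $L$. I then prove by induction the coordinate estimate $\mathfrak{L}(F)^n\subseteq(L^n)^{(\Lambda)}$. The inductive step is the crux: every $f\in\Hom_R(F,\mathfrak{L}(F))$ carries each basis vector into $\mathfrak{L}(F)=L^{(\Lambda)}$, so for $y\in\mathfrak{L}(F)^n\subseteq(L^n)^{(\Lambda)}$ each coordinate of $f(y)$ is a finite sum of products $cb$ with $c\in L^n$ and $b\in L$, hence lies in $L^{n+1}$; since $\mathfrak{L}(F)^{n+1}={\mathfrak{L}(F)^n}_F\mathfrak{L}(F)$ is generated by the images $f(y)$, we obtain $\mathfrak{L}(F)^{n+1}\subseteq(L^{n+1})^{(\Lambda)}$. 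If $L^k=0$ then $\mathfrak{L}(F)^k\subseteq(L^k)^{(\Lambda)}=0$, so $\mathfrak{L}(F)$ is nilpotent.

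For (b)$\Rightarrow$(c), let $P$ be projective and fix $F=P\oplus Q$ with $F$ free. Lemma~\ref{Lsumas} yields $\mathfrak{L}(P)\subseteq\mathfrak{L}(F)$, and by (b) some power $\mathfrak{L}(F)^k$ (computed in $F$) vanishes. Monotonicity of the product in both variables gives $\mathfrak{L}(P)^k\subseteq\mathfrak{L}(F)^k=0$ for powers taken in $F$, and because $P$ is a direct summand of $F$, Lemma~\ref{proddirsumm} identifies the product of submodules of $P$ computed in $P$ with that computed in $F$; hence the power $\mathfrak{L}(P)^k$ computed inside $P$ is also $0$. The left/right parenthetical follows by running the same argument over $R^{\mathrm{op}}$, and (a) is unambiguous because the prime radical of $R$ is a two-sided ideal whose nilpotency is left-right symmetric.

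The one genuinely module-theoretic difficulty is in (a)$\Rightarrow$(b): the product $N_FK$ is assembled from \emph{all} homomorphisms $F\to K$, so it mixes the coordinates of a free module of possibly infinite rank, and one cannot merely say that each summand $\mathfrak{L}(R_\lambda)$ is nilpotent. The argument succeeds precisely because the nilpotency index $k$ of $L$ is uniform across coordinates, which is what the estimate $\mathfrak{L}(F)^n\subseteq(L^n)^{(\Lambda)}$ encodes; the remaining steps are routine applications of Lemmas~\ref{Lsumas} and \ref{proddirsumm} together with Corollary~\ref{nesL}.
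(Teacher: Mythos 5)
Your proof is correct and follows essentially the same route as the paper's: both reduce everything to $\mathfrak{L}(\_)$ via Corollary~\ref{nesL}, use Lemma~\ref{Lsumas} to get $\mathfrak{L}(R^{(\Lambda)})=\mathfrak{L}(R)^{(\Lambda)}$ and then control the $n$-th power coordinatewise by $\mathfrak{L}(R)^n$, and handle projectives by passing to a free complement and invoking Lemma~\ref{proddirsumm} to identify products computed in $P$ with those computed in $R^{(\Lambda)}$. Your explicit coordinatewise induction, and your remark identifying the ring-theoretic prime radical with $\mathfrak{L}({}_RR)$, merely spell out details the paper leaves implicit.
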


\begin{proof}
	(a)$\Rightarrow$(b) Consider a free $R$-module $R^{(\Lambda)}$. Then $\mathfrak{L}\left(R^{(\Lambda)} \right)=\left(\mathfrak{L}(R) \right)^{(\Lambda)}$ by Lemma \ref{Lsumas}. By hypothesis, there exists $n>0$ such that $\mathfrak{L}(R)^n=0$. Suppose $n=2$. Using Lemma \ref{proddirsumm} and the properties of the product we get
	\[{\left(\mathfrak{L}(R) \right)^{(\Lambda)}}_{R^{(\Lambda)}}\left(\mathfrak{L}(R) \right)^{(\Lambda)}=\left[ {\left(\mathfrak{L}(R) \right)^{(\Lambda)}}_{R^{(\Lambda)}}\mathfrak{L}(R)\right] ^{(\Lambda)}=\left[\sum_{\Lambda}\left(\mathfrak{L}(R) _{R^{(\Lambda)}}\mathfrak{L}(R) \right)\right]^{(\Lambda)}\]
	\[=\left[\sum_{\Lambda}\left(\mathfrak{L}(R) _{R}\mathfrak{L}(R) \right)\right]^{(\Lambda)}=\left[\sum_{\Lambda}\left(\mathfrak{L}(R)^2\right)\right]^{(\Lambda)}=0.\]
	Thus, 
	\[\left(\mathfrak{L}\left(R^{(\Lambda)}\right) \right)^n=\left[\sum_{\Lambda^{n-1}}\left(\mathfrak{L}(R)^n\right)\right]^{(\Lambda)}=0.\]
	Therefore, the prime radical of $R^{(\Lambda)}$ is nilpotent.
	
	(b)$\Rightarrow$(c) Let $P$ be a projective $R$-module. Then there exists a free $R$-module $R^{(\Lambda)}$ such that $R^{(\Lambda)}=P\oplus P'$. Hence $\mathfrak{L}\left( R^{(\Lambda)}\right) =\mathfrak{L}(P)\oplus\mathfrak{L}(P')$. By hypothesis, there exists $n>0$ such that $\mathfrak{L}\left( R^{(\Lambda)}\right)^n=0$. Therefore $\left(\mathfrak{L}(P)\oplus\mathfrak{L}(P') \right)^n=0$. Suppose $n=2$. Then
	\[0=\left(\mathfrak{L}(P)\oplus\mathfrak{L}(P') \right)^2=\left[\mathfrak{L}(P)\oplus\mathfrak{L}(P')\right]_{R^{(\Lambda)}}\left[\mathfrak{L}(P)\oplus\mathfrak{L}(P')\right]\]
	\[=\left( \left[\mathfrak{L}(P)\oplus\mathfrak{L}(P')\right]_{R^{(\Lambda)}}\mathfrak{L}(P)\right) \oplus\left( \left[\mathfrak{L}(P) \oplus \mathfrak{L}(P')\right]_{R^{(\Lambda)}}\mathfrak{L}(P')\right).\]
	On the other hand,
	\[\mathfrak{L}(P)_P\mathfrak{L}(P)=\mathfrak{L}(P)_{R^{(\Lambda)}}\mathfrak{L}(P)\subseteq\left[\mathfrak{L}(P)\oplus\mathfrak{L}(P')\right]_{R^{(\Lambda)}}\mathfrak{L}(P)=0.\]
	This implies that $\mathfrak{L}(P)$ is a nilpotent submodule of $P$.
	
	(c)$\Rightarrow$(a) is trivial.
\end{proof}

\section{Nil-submodules and acc on annihilators}\label{AccandNil}

For a ring $R$ and $Y\subseteq R$, the left (resp. right) annihilator of $Y$ in $R$ is denoted by $\ann_R^\ell(Y)$ (resp. $\ann_R^r(Y)$). For a subset $X\subseteq M$ of an $R$-module $M$ with $S=\End_R(M)$, the left annihilator of $X$ in $S$ is denoted by $\mathbf{l}_S(X)=\{f\in S\mid f(X)=0\}$. If $\left\langle X\right\rangle $ is the $R$-submodule generated by $X\subseteq M$ then $\mathbf{l}_S(X)=\mathbf{l}_S\left( \left\langle X\right\rangle \right)$. On the other hand, an $R$-module $M$ is said to have the \emph{acc on annihilators} if any chain
\[\bigcap_{f\in Y_1}\Ker f\subseteq\bigcap_{f\in Y_2}\Ker f\subseteq\cdots\]
with $Y_i\subseteq \End_R(M)$ ($i>0$) becomes stationary in finitely many steps.  

\begin{prop}\label{maccsacc}
If $M$ satisfies acc on annihilators, then $S=\End_R(M)$ satisfies acc on right annihilators.
\end{prop}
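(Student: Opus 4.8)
The plan is to construct an order-preserving injection from the poset of right annihilators of $S$ into the poset of submodules of $M$ that appear in the acc hypothesis, i.e. those of the form $\bigcap_{f\in Y}\Ker f$ with $Y\subseteq S$; once such a map exists, the stabilization of chains of kernel-intersections in $M$ forces the stabilization of chains of right annihilators in $S$. The basic tool is the elementary translation between composition in $S$ and inclusion of images and kernels in $M$: for $f,g\in S$ one has $fg=0$ precisely when $\Img g\subseteq\Ker f$, since $fg=0$ means $f(g(m))=0$ for all $m\in M$. Applying this to every element of a subset $Y\subseteq S$ yields
\[
\ann_S^r(Y)=\bigl\{g\in S \mid \Img g\subseteq \textstyle\bigcap_{f\in Y}\Ker f\bigr\}.
\]
Thus each right annihilator of $S$ is exactly the set of endomorphisms whose image lands inside a fixed kernel-intersection submodule of $M$.

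Next I would put an arbitrary right annihilator into canonical form. Writing $A=\ann_S^r(X)$ for some $X\subseteq S$, the standard annihilator Galois connection gives $A=\ann_S^r(\ann_S^\ell(A))$, because $X\subseteq\ann_S^\ell(A)$ always holds and $\ann_S^r$ reverses inclusions. Setting $K(A)=\bigcap_{f\in\ann_S^\ell(A)}\Ker f$, the displayed formula then reads $A=\{g\in S\mid \Img g\subseteq K(A)\}$, and $K(A)$ is a submodule of the exact type occurring in the acc hypothesis. From here the map $A\mapsto K(A)$ is visibly order-preserving and injective: if $A_1\subseteq A_2$ then $\ann_S^\ell(A_1)\supseteq\ann_S^\ell(A_2)$, hence $K(A_1)\subseteq K(A_2)$; and if $K(A_1)=K(A_2)$ then the canonical description $A_i=\{g\mid\Img g\subseteq K(A_i)\}$ forces $A_1=A_2$.

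With these properties in hand the conclusion is immediate: an ascending chain $A_1\subseteq A_2\subseteq\cdots$ of right annihilators of $S$ produces an ascending chain $K(A_1)\subseteq K(A_2)\subseteq\cdots$ of kernel-intersection submodules of $M$, which becomes stationary by the acc-on-annihilators hypothesis, and injectivity of $A\mapsto K(A)$ then transports the stationarity back to the original chain. The one step that carries the real content is the canonical description of right annihilators via $A=\ann_S^r(\ann_S^\ell(A))$ together with the image/kernel translation; once that identification is recorded, monotonicity and injectivity are formal, and notably no projectivity or retractability assumption on $M$ is required for this direction.
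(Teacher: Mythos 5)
Your proof is correct and is essentially the paper's argument: both reduce a chain of annihilator ideals of $S$ to a chain of kernel-intersection submodules of $M$ via the Galois connection between $\ann_S^\ell$, $\ann_S^r$ on the one hand and $\mathbf{l}_S(-)$, $\bigcap\Ker(-)$ on the other, and then recover the original chain from the stabilized one. The only cosmetic difference is that you work directly with acc on right annihilators, recovering $A$ as $\{g\in S\mid\Img g\subseteq K(A)\}$, whereas the paper passes to the equivalent dcc on left annihilators and recovers $\ann_S^\ell(X_k)$ as $\mathbf{l}_S\bigl(\bigcap_{f\in\ann_S^\ell(X_k)}\Ker f\bigr)$.
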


\begin{proof}
We will show that $S$ satisfies the descending chain condition (dcc) on left annihilators. Let 
\[\ann_S^\ell(X_1)\supseteq \ann_S^\ell(X_2)\supseteq\cdots\]
be a descending chain of left annihilators in $S$. Then, there is an ascending chain
\[\bigcap_{f\in\ann_S^\ell(X_1)}\Ker f\subseteq \bigcap_{f\in\ann_S^\ell(X_2)}\Ker f\subseteq\cdots\]
in $M$. By hypothesis, there exists a positive integer $k$ such that $\bigcap_{f\in\ann_S^\ell(X_k)}\Ker f= \bigcap_{f\in\ann_S^\ell(X_{k+i})}\Ker f$ for all $i>0$. We claim that 
\[\mathbf{l}_S\left( \bigcap_{f\in\ann_S^\ell(X_k)}\Ker f\right) =\ann_S^\ell(X_1).\]
Let $f\in\ann_S^\ell(X_k)$. It follows that $f\left( \bigcap_{f\in\ann_S^\ell(X_k)}\Ker f\right)=0$. Hence, $\ann_S^\ell(X_k)\subseteq\mathbf{l}_S\left( \bigcap_{f\in\ann_S^\ell(X_k)}\Ker f\right)$. Now, let $g\in\mathbf{l}_S\left( \bigcap_{f\in\ann_S^\ell(X_k)}\Ker f\right)$ and $h\in X_k$. It follows that $fh=0$ for all $f\in\ann_S^\ell(X_k)$, i.e., $h(M)\subseteq\bigcap_{f\in\ann_S^\ell(X_k)}\Ker f$. Therefore, $gh=0$. Thus, $g\in\ann_S^\ell(X_k)$. Then,
\[\ann_S^\ell(X_k)=\mathbf{l}_S\left( \bigcap_{f\in\ann_S^\ell(X_k)}\Ker f\right) = \mathbf{l}_S\left( \bigcap_{f\in\ann_S^\ell(X_{k+i})}\Ker f\right) =\ann_S^\ell(X_{k+i})\]
for all $i>0$. This implies that $S$ satisfies acc on right annihilators.
\end{proof}

Notice that the converse of Proposition \ref{maccsacc} is not true in general, as the next example shows.

\begin{example}
	Consider the $\mathbb{Z}$-module $M=\mathbb{Z}_{p^\infty}$ with $p$ a prime number. It is known that $S=\End_\mathbb{Z}(M)$ is isomorphic to the ring of $p$-adic numbers. Thus, $S$ is a commutative Noetherian ring. On the other hand, each submodule of $M$ is the kernel of an endomorphism of $M$, hence $M$ does not satisfy the acc on annihilators.
\end{example}


\begin{lemma}\label{nilpsubnil}
Let $M$ be $M^{(\Lambda)}$-projective for every index set $\Lambda$ and let $K<N$ fully invariant submodules of $M$ with $N$ a nil-submodule. 
If $M$ satisfies acc on annihilators, then $N/K$ contains a nonzero nilpotent submodule.
\end{lemma}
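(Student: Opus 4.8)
The plan is to reduce the statement to producing a single submodule of $N/K$ whose square vanishes, mirroring the ring-theoretic fact that in a ring with acc on one-sided annihilators a nil ideal that strictly contains an ideal $K$ yields a nonzero square-zero ideal modulo $K$. Concretely, I would aim to find a submodule $L\leq N$ with $L\not\leq K$ and $L_ML\leq K$; then by Lemma \ref{epiproduct} the image $\pi(L)$ under $\pi\colon M\to M/K$ satisfies $(\pi(L))^2=\pi(L_ML)=0$ and is nonzero, so it is the desired nonzero nilpotent submodule of $N/K$.

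First I would pass to the endomorphism ring. By Proposition \ref{maccsacc}, $S=\End_R(M)$ satisfies acc on right annihilators, so every nonempty family of right annihilators has a maximal member. Consider the family of right annihilators $\ann_S^r(\phi)$, where $\phi\in\Hom_R(M,Rd)$ for some $d\in N$ and $\phi(M)\not\leq K$. If this family is empty, then every $\phi\in\Hom_R(M,Rc)$ with $c\in N\setminus K$ has $\phi(c)\in\phi(M)\leq K$, so $Rc_MRc\leq K$ and we are done immediately; otherwise choose $\phi_0$ with $\ann_S^r(\phi_0)$ maximal, say $\phi_0\in\Hom_R(M,Rd_0)$ with $d_0\in N$ and $\phi_0(M)\not\leq K$. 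Note that $\phi_0$ is nilpotent, because $N$ is a nil-submodule and $\phi_0(M)\leq Rd_0$. Set $L=\phi_0(M)$, a submodule of $N$ not contained in $K$.

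The heart of the argument is to show that $\phi_0\psi\phi_0(M)\leq K$ for every $\psi\in S$. Suppose instead that $\phi_0\psi\phi_0(M)\not\leq K$ for some $\psi$. Writing $\phi_j=(\phi_0\psi)^j\phi_0=\phi_0(\psi\phi_0)^j$, each $\phi_j$ has image in $Rd_0\leq N$, and $\phi_1(M)=\phi_0\psi\phi_0(M)\not\leq K$, so $\ann_S^r(\phi_1)$ lies in the family; since $\ann_S^r(\phi_0)\subseteq\ann_S^r(\phi_1)$, maximality forces $\ann_S^r(\phi_0)=\ann_S^r(\phi_1)$. I would then prove $\ann_S^r(\phi_j)=\ann_S^r(\phi_0)$ for all $j$: given $\phi_{j+1}\sigma=0$, put $w=\phi_j\sigma=\phi_0\bigl((\psi\phi_0)^j\sigma\bigr)$ and observe that $\phi_0\psi\cdot w=\phi_{j+1}\sigma=0$, so $(\psi\phi_0)^j\sigma$ annihilates $\phi_1$, hence $\phi_0$, giving $w=\phi_0\bigl((\psi\phi_0)^j\sigma\bigr)=0$ and thus $\sigma\in\ann_S^r(\phi_j)$. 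But $u=\phi_0\psi$ is nilpotent (its image lies in $Rd_0$), so $\phi_p=u^p\phi_0=0$ for some $p$, whence $\ann_S^r(\phi_p)=S\neq\ann_S^r(\phi_0)$ (as $\phi_0\neq 0$), a contradiction.

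Finally, from $\phi_0\psi\phi_0(M)\leq K$ for all $\psi\in S$, I would deduce $L_ML\leq K$: any $g\in\Hom_R(M,L)$ lifts through the epimorphism $\phi_0\colon M\to L$ by quasi-projectivity to some $\psi\in S$ with $g=\phi_0\psi$, so $g(L)=\phi_0\psi\phi_0(M)\leq K$ and therefore $L_ML=\sum_g g(L)\leq K$, which yields the nonzero square-zero submodule $\pi(L)$ of $N/K$ as above. I expect the main obstacle to be the annihilator-stabilization step: translating the submodule product $L_ML$ into compositions in $S$ (via Lemma \ref{fprod} and the quasi-projective lifting) and checking that the telescoping identities $\phi_j=\phi_0(\psi\phi_0)^j$ allow the single relation $\ann_S^r(\phi_0)=\ann_S^r(\phi_1)$ to propagate to all $j$, so that the nilpotency of $u=\phi_0\psi$ produces the contradiction.
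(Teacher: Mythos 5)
Your argument is correct, and it takes a genuinely different route from the paper's. The paper passes to the quotient: it observes that $\Hom_R(M/K,N/K)$ is a nil ideal of $T=\End_R(M/K)$, invokes the Herstein--Small lemma (a nil ideal in a ring with the appropriate chain condition on annihilators contains a nonzero nilpotent ideal) to produce $g\in T$ with $gT$ nilpotent, and then translates the nilpotent right ideal $gT=\Hom_R(M/K,gT(M/K))$ back into the nilpotent submodule $gT(M/K)$ of $N/K$. You instead run the classical Utumi-style maximal-annihilator argument directly in $S=\End_R(M)$: among maps $\phi\in\Hom_R(M,Rd)$, $d\in N$, with $\phi(M)\not\leq K$, you pick one with $\ann_S^r(\phi_0)$ maximal, use the telescoping identities $\phi_j=\phi_0(\psi\phi_0)^j$ together with the nilpotency of $u=\phi_0\psi\in\Hom_R(M,Rd_0)$ to force $\phi_0 S\phi_0(M)\leq K$, and then convert this to $L_ML\leq K$ for $L=\phi_0(M)$ by the quasi-projective lifting, finishing with Lemma \ref{epiproduct}. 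Two things your route buys: it is self-contained (no external citation), and it only ever uses nilpotency of homomorphisms into cyclic submodules $Rd$ with $d\in N$, which is literally the definition of nil-submodule; the paper's proof, by contrast, asserts that $\Hom_R(M,N)$ and $\Hom_R(M/K,N/K)$ are nil ideals, which requires nilpotency of maps whose images need not be cyclic and is therefore a slightly stronger input. The paper's proof is shorter once the Herstein--Small lemma is granted. One small presentational point: when the family of annihilators is empty you should note explicitly that $R\bar{c}=\pi(Rc)$ is nonzero because $c\notin K$, and that it lies in $N/K$ because $c\in N$ and $K\leq N$; both are immediate, and the same remarks apply to $\pi(L)$ in the main case since $L\leq Rd_0\leq N$.
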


\begin{proof}
If $\Hom_R(M/K,N/K)=0$, then $N/K$ is nilpotent. Also, if $\Hom(M,N)=0$, then $\Hom_R(M/K,N/K)=0$ by the projectivity condition. Hence, we can assume that $\Hom_R(M/K,N/K)\neq0$ and $\Hom_R(M,N)\neq 0$. Given $f\in\Hom_R(M,N)$, since $K$ is fully invariant, there is a homomorphism $\overline{f}:M/K\to N/K$. On the other hand, since $M$ is quasi-projective, for any $g\in\Hom_R(M/K,N/K)$ there exists $f\in\Hom_R(M,N)$ such that $\pi f=g\pi$ where $\pi:M\to M/K$ is the canonical projection. Hence, there is a surjective homomorphism of rings without one $\Hom_R(M,N)\to \Hom_R(M/K,N/K)$. By hypothesis and Lemma \ref{factornil} we have that, $\Hom_R(M,N)$ and $\Hom_R(M/K,N/K)$ are nil-ideals of $\End_R(M)$ and $\End_R(M/K)$ respectively. It follows from Proposition \ref{maccsacc} and \cite[Lemma 1]{herstein1964nil} that $\Hom_R(M/K,N/K)$ contains a nilpotent ideal. In particular, there is $g\in\Hom_R(M/K,N/K)$ such that $gT$ is nilpotent, say $(gT)^n=0$, where $T=\End_R(M/K)$. It follows from \cite[18.4]{wisbauerfoundations} that $gT=\Hom_R(M/K,gTM/K)$. Therefore,
\begin{equation*}
\begin{split}
(gT(M/K))^{n+1} & ={gT(M/K)^n}_{M/K}gT(M/K) \\
& =\Hom_R(M/K,gT(M/K))gT(M/K)^n \\
& =\Hom_R(M/K,gT(M/K))^2gT(M/K)^{n-1} \\
& =\cdots \\
& =\Hom_R(M/K,gT(M/K))^ngT(M/K)=0.
\end{split}
\end{equation*}
\end{proof}

\begin{prop}\label{accnillocnil}
Let $M$ be $M^{(\Lambda)}$-projective for every index set $\Lambda$ and let $N$ be a fully invariant 
nil-submodule of $M$. If $M$ 
satisfies acc on annihilators, then $N$ is locally nilpotent.
\end{prop}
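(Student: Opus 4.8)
The plan is to prove the equivalent statement $N\subseteq\mathfrak{L}(M)$; since every submodule of the locally nilpotent submodule $\mathfrak{L}(M)$ is again locally nilpotent, this gives at once that $N$ is locally nilpotent. The engine of the argument is Lemma \ref{nilpsubnil}, which I would apply to $M$ \emph{itself} (this is where the hypothesis ``acc on annihilators'' is available), in combination with the identity $\mathfrak{L}(M/\mathfrak{L}(M))=0$ furnished by Proposition \ref{Lfiyrad}.

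I would argue by contradiction. Assume $N\not\subseteq\mathfrak{L}(M)$ and put $A=\mathfrak{L}(M)\cap N$. Since $\mathfrak{L}(M)$ and $N$ are fully invariant, so is $A$; and as $A\subseteq\mathfrak{L}(M)$, the submodule $A$ is locally nilpotent. The assumption $N\not\subseteq\mathfrak{L}(M)$ says precisely that $A\subsetneq N$. I then apply Lemma \ref{nilpsubnil} to the pair $A<N$ of fully invariant submodules of $M$ (using that $M$ is $M^{(\Lambda)}$-projective, satisfies acc on annihilators, and that $N$ is nil): it produces a nonzero nilpotent submodule of $N/A$, i.e.\ a submodule $P$ with $A\subsetneq P\subseteq N$ and $(P/A)^m=0$ for some $m>0$.

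The crux is then to force $P$ back into $\mathfrak{L}(M)$. From $(P/A)^m=0$ and Lemma \ref{epiproduct} (applied repeatedly, so that the canonical projection $q\colon M\to M/A$ satisfies $q(P^m)=q(P)^m=(P/A)^m$, which is legitimate because $A$ is fully invariant) I would conclude $P^m\subseteq A\subseteq\mathfrak{L}(M)$. Writing $\pi\colon M\to M/\mathfrak{L}(M)$ for the projection and invoking Lemma \ref{epiproduct} once more, the image $\pi(P)$ satisfies $\pi(P)^m=\pi(P^m)=0$, so $\pi(P)$ is a nilpotent, hence locally nilpotent, submodule of $M/\mathfrak{L}(M)$. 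By Proposition \ref{Lfiyrad} we have $\mathfrak{L}(M/\mathfrak{L}(M))=0$, whence $\pi(P)=0$, that is $P\subseteq\mathfrak{L}(M)$. Together with $P\subseteq N$ this yields $P\subseteq\mathfrak{L}(M)\cap N=A$, contradicting $A\subsetneq P$. Therefore $N\subseteq\mathfrak{L}(M)$, and $N$ is locally nilpotent.

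The step that I expect to require the most care is the correct invocation of Lemma \ref{nilpsubnil}: it must be used \emph{inside} $M$ rather than inside the quotient $M/\mathfrak{L}(M)$, since the latter need not inherit the acc on annihilators, and the choice $K=A=\mathfrak{L}(M)\cap N$ is exactly what guarantees that the nilpotent submodule returned by the lemma is trapped in $\mathfrak{L}(M)$. The only other point worth stating explicitly is that Lemma \ref{epiproduct} iterates to $\pi(X^m)=\pi(X)^m$ for a fully invariant kernel (the same proof works with two nested submodules in place of a single one); this is routine but is used twice.
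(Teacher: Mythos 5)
Your proof is correct and follows essentially the same route as the paper: both arguments reduce the claim to $N\subseteq\mathfrak{L}(M)$ and derive a contradiction from Lemma \ref{nilpsubnil} together with Lemma \ref{epiproduct} and the vanishing of $\mathfrak{L}\left(M/\mathfrak{L}(M)\right)$ (equivalently, the semiprimeness of $\mathfrak{L}(M)$ from Corollary \ref{Lsp}). Your choice of the pair $\mathfrak{L}(M)\cap N<N$, rather than $\mathfrak{L}(M)<N+\mathfrak{L}(M)$ as in the paper's invocation of Lemma \ref{nilpsubnil}, is in fact the more careful one, since it keeps the nil hypothesis on $N$ and the acc hypothesis on $M$ exactly as that lemma requires instead of needing $N+\mathfrak{L}(M)$ to be nil or the quotient to inherit acc on annihilators.
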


\begin{proof}
Since $\mathfrak{L}(M)$ is a semiprime (fully invariant) submodule of $M$, $M/\mathfrak{L}(M)$ is projective in $\sigma[M/\mathfrak{L}(M)]$ and $M/\mathfrak{L}(M)$ is retractable because it is a semiprime module. By Lemma \ref{factornil}, $\frac{N+\mathfrak{L}(M)}{\mathfrak{L}(M)}$ is a nil-submodule of $M/\mathfrak{L}(M)$. Suppose $N\nsubseteq \mathfrak{L}(M)$. By Lemma \ref{nilpsubnil}, $\frac{N+\mathfrak{L}(M)}{\mathfrak{L}(M)}$ contains a nonzero nilpotent submodule $\frac{K}{\mathfrak{L}(M)}$. Hence, there exists $\ell>0$ such that $K^\ell\subseteq \mathfrak{L}(M)$ by Lemma \ref{epiproduct}. It follows from Corollary \ref{Lsp} that $K\subseteq \mathfrak{L}(M)$. Hence $\frac{K}{\mathfrak{L}(M)}=0$. Thus, $\frac{N+\mathfrak{L}(M)}{\mathfrak{L}(M)}=0$, that is, $N\subseteq \mathfrak{L}(M)$.
\end{proof}

\begin{defn}
Let $M$ be a module and let $N\leq M$. The \emph{annihilator of $N$ in $M$} is the submodule
\[\ann_M(N)=\bigcap\{\Ker f\mid f\in\Hom_R(M,N)\}.\] 

The \emph{right annihilator of $N$ in $M$} is given by the submodule
\[\ann^r_M(N)=\sum\{K\leq M\mid N_MK=0\}.\]
\end{defn}

\begin{rem}
It is not difficult to see that $\ann_M(N)$ is a fully invariant submodule of $M$ and it is the largest submodule of $M$ such that $\ann_M(N)_MN=0$. When $M$ is $M^{(\Lambda)}$-projective for every index set $\Lambda$, $\ann^r_M(N)$ is fully invariant and is the largest submodule of $M$ such that $N_M\ann^r_M(N)=0$ \cite[Remark 1.15]{mauacc}.
\end{rem}

\begin{lemma}\label{rannintersection}
Let $M$ be a module and let $\{N_i\}_I$ be a family of submodules of $M$. Then $\bigcap_{i\in I}\ann^r_M(N_i)=\ann^r_M(\sum_{i\in I}N_i)$.
\end{lemma}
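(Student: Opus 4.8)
Let me think about the structure of this claim. We have $\ann^r_M(N)=\sum\{K\leq M\mid N_MK=0\}$, and by the Remark (when $M$ is appropriately projective), $\ann^r_M(N)$ is the largest submodule $K$ with $N_MK=0$. So $\ann^r_M(N)$ is characterized by: $N_M\ann^r_M(N)=0$, and it contains every $K$ with $N_MK=0$.

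The plan is a standard double-inclusion argument using the universal property of the right annihilator as the *largest* submodule killed on the right.

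For the inclusion $\ann^r_M(\sum_{i\in I}N_i)\subseteq\bigcap_{i\in I}\ann^r_M(N_i)$: Let $L=\ann^r_M(\sum N_i)$. Then $(\sum_i N_i)_M L=0$. The key fact I'd want is that the product is monotone in the first argument and that $(\sum_i N_i)_M L\supseteq (N_j)_M L$ for each fixed $j$ — this should follow from $N_j\subseteq\sum_i N_i$ together with monotonicity of the product $N_MK$ in $N$ (a general property of this product, listed in the cited Proposition 1.3 of PepeGab). Hence $(N_j)_M L=0$ for every $j$, so $L\subseteq\ann^r_M(N_j)$ for all $j$ by maximality, giving $L\subseteq\bigcap_j\ann^r_M(N_j)$.

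For the reverse inclusion $\bigcap_{i\in I}\ann^r_M(N_i)\subseteq\ann^r_M(\sum_{i\in I}N_i)$: Set $D=\bigcap_i\ann^r_M(N_i)$. Then for each $i$ we have $D\subseteq\ann^r_M(N_i)$, so $(N_i)_M D=0$ for every $i$. I now want to conclude $(\sum_i N_i)_M D=0$. This is the one step that needs the defining formula rather than just maximality: by definition $(\sum_i N_i)_M D=\sum\{f(\sum_i N_i)\mid f\in\Hom_R(M,D)\}$, and since each $f$ is additive, $f(\sum_i N_i)=\sum_i f(N_i)$, so the product distributes over the (possibly infinite) sum in its first slot, i.e. $(\sum_i N_i)_M D=\sum_i\big((N_i)_M D\big)=0$. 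Therefore $D\subseteq\ann^r_M(\sum_i N_i)$ by maximality. Combining both inclusions gives the equality.

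The only genuinely delicate point is the distributivity of $N_MK$ over arbitrary sums in the first variable; everything else is immediate from the maximality characterization in the Remark. I would verify that distributivity directly from the defining sum $N_MK=\sum\{f(N)\mid f\in\Hom_R(M,K)\}$ as sketched above — note this direction does not even require the $M^{(\Lambda)}$-projectivity hypothesis, only additivity of homomorphisms, so it is purely formal. (The use of the Remark's maximality statement does invoke the projectivity hypothesis, but this is available throughout.) I expect the proof to be short; the main thing to get right is simply keeping straight which inclusion uses monotonicity in the first argument and which uses distributivity over the sum.
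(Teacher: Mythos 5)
Your proposal is correct and follows essentially the same route as the paper: one inclusion from the fact that $\ann^r_M(-)$ reverses inclusions (monotonicity of the product in its first argument), and the other from the distributivity of $(-)_MK$ over sums in the first slot, which is immediate from additivity of homomorphisms. Your parenthetical observation that the step $N_i{}_M\ann^r_M(N_i)=0$ silently relies on the maximality characterization from the Remark (which the paper only guarantees under the $M^{(\Lambda)}$-projectivity hypothesis, not assumed in the lemma's statement) is a fair point, but it applies equally to the paper's own proof.
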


\begin{proof}
We always have that $\ann^r_M(\sum_{i\in I}N_i)\subseteq \ann^r_M(N_i)$ for all $i\in I$. Hence $\ann^r_M(\sum_{i\in I}N_i)\subseteq\bigcap_{i\in I} \ann^r_M(N_i)$. On the other hand, 
\[\left( \sum_{i\in I}N_i\right) _M\left( \bigcap_{i\in I}\ann^r_M(N_i)\right)=\sum_{i\in I}\left( {N_i}_M\bigcap_{i\in I}\ann^r_M(N_i)\right) =0.\]
Thus, $\bigcap_{i\in I}\ann^r_M(N_i)=\ann^r_M(\sum_{i\in I}N_i)$.
\end{proof}

\begin{lemma}\label{dccannr}
Let $M$ be a module. If $M$ satisfies acc on annihilators, then the set $\{\ann^r_M(N)\mid N\leq M\}$ satisfies dcc.
\end{lemma}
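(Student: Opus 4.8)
The plan is to convert the descending chain condition for right annihilators into an ascending chain condition for left annihilators, the latter being immediate from the hypothesis, by means of an order-reversing Galois connection between the operators $\ann^r_M(\_)$ and $\ann_M(\_)$. First I reduce to a monotone situation. Given a descending chain $\ann^r_M(N_1)\supseteq\ann^r_M(N_2)\supseteq\cdots$, put $N_i'=\sum_{j=1}^i N_j$; then $N_1'\subseteq N_2'\subseteq\cdots$ and, by Lemma \ref{rannintersection} together with the fact that the chain is descending, $\ann^r_M(N_i')=\bigcap_{j=1}^i\ann^r_M(N_j)=\ann^r_M(N_i)$. Hence there is no loss in assuming that the chain comes from an ascending chain $N_1\subseteq N_2\subseteq\cdots$ of submodules.

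Next I record that the left annihilators already satisfy the relevant chain condition. Indeed $\ann_M(K)=\bigcap\{\Ker f\mid f\in\Hom_R(M,K)\}$ has exactly the form $\bigcap_{f\in Y}\Ker f$ with $Y=\Hom_R(M,K)\subseteq\End_R(M)$, so the hypothesis that $M$ satisfies acc on annihilators says, in particular, that every ascending chain of submodules of the form $\ann_M(K)$ is stationary. Both operators are order-reversing: if $K\subseteq K'$ then $\Hom_R(M,K)\subseteq\Hom_R(M,K')$ forces $\ann_M(K)\supseteq\ann_M(K')$, and monotonicity of the product in its first variable gives $\ann^r_M(N)\supseteq\ann^r_M(N')$ whenever $N\subseteq N'$.

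The heart of the argument is the chain of equivalences $N_MK=0\iff N\subseteq\ann_M(K)\iff K\subseteq\ann^r_M(N)$. The first equivalence is the characterization of $\ann_M(K)$ as the largest submodule $L$ with $L_MK=0$ (from the Remark), combined with monotonicity. The second is where the real work lies: the implication $N_MK=0\Rightarrow K\subseteq\ann^r_M(N)$ is immediate from the definition of $\ann^r_M(N)$ as a sum, but the converse $K\subseteq\ann^r_M(N)\Rightarrow N_MK=0$ amounts to $N_M\ann^r_M(N)=0$, i.e.\ to the product distributing over the sum defining $\ann^r_M(N)$; this is precisely the content of the Remark and is the main obstacle I expect. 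Granting it, one has a genuine order-reversing Galois connection, whence the standard closure identity $\ann^r_M(\ann_M(\ann^r_M(N)))=\ann^r_M(N)$. In particular $\ann_M$ is injective on the set of right annihilators: if $A$ and $B$ are right annihilators with $\ann_M(A)=\ann_M(B)$, then $A=\ann^r_M(\ann_M(A))=\ann^r_M(\ann_M(B))=B$.

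Finally I assemble the pieces. Applying the order-reversing operator $\ann_M$ to the descending chain $\ann^r_M(N_1)\supseteq\ann^r_M(N_2)\supseteq\cdots$ produces an ascending chain $\ann_M(\ann^r_M(N_1))\subseteq\ann_M(\ann^r_M(N_2))\subseteq\cdots$ of left annihilators, which stabilizes by the acc on annihilators. Injectivity of $\ann_M$ on right annihilators then forces the original chain $\ann^r_M(N_i)$ to stabilize, proving the dcc. Thus the only nontrivial input is the identity $N_M\ann^r_M(N)=0$ supplied by the Remark; the reduction and the remaining Galois-theoretic bookkeeping are routine.
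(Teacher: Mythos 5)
Your proof is correct and is essentially the paper's own argument: the paper's entire proof is the one-line observation that $\ann^r_M(\ann_M(\ann^r_M(N)))=\ann^r_M(N)$, and you supply exactly the details it leaves implicit (that $\ann_M(K)=\bigcap_{f\in\Hom_R(M,K)}\Ker f$ is an annihilator in the sense of the acc hypothesis, the order-reversing Galois connection, and the resulting injectivity on right annihilators). You also correctly flag that the one genuinely nontrivial input is $N_M\ann^r_M(N)=0$, which the paper's Remark only asserts under the $M^{(\Lambda)}$-projectivity hypothesis even though the lemma is stated for an arbitrary module --- a looseness inherited from the paper, not introduced by you.
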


\begin{proof}
We just have to notice that $\ann^r_M(\ann_M(\ann^r_M(N)))=\ann^r_M(N)$.
\end{proof}

\begin{lemma}\label{dccl}
Suppose $M$ is $M^{(\Lambda)}$-projective for every index set $\Lambda$. Then,
\begin{enumerate}
\item $\mathbf{l}_S\left( \bigcap_{f\in\mathbf{l}_S(X)}\Ker f\right)=\mathbf{l}_S(X)$ for any subset $X$ of $M$.
\item $\bigcap_{f\in Y}\Ker f=\bigcap\left\lbrace \Ker g\mid g\in\mathbf{l}_S\left( \bigcap_{f\in Y}\Ker f\right) \right\rbrace $ for any subset $Y$ of $\End_R(M)$.
\item $M$ satisfies acc on annihilators if and only if the set $\{\mathbf{l}_S(X)\mid X\subseteq M\}$ satisfies dcc.
\end{enumerate}
\end{lemma}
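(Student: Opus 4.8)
The plan is to observe that the two assignments $X \mapsto \mathbf{l}_S(X)$, for $X \subseteq M$, and $Y \mapsto \bigcap_{f \in Y}\Ker f$, for $Y \subseteq S$, form an order-reversing Galois connection, and then to read off all three statements from its formal properties. Abbreviate $\lambda(X) = \mathbf{l}_S(X)$ and $\rho(Y) = \bigcap_{f\in Y}\Ker f$. The single fact I need is the adjunction
\[ X \subseteq \rho(Y) \iff Y \subseteq \lambda(X), \]
which holds because each side is literally the assertion that $f(x) = 0$ for all $x \in X$ and all $f \in Y$. From this, in the standard way, both $\lambda$ and $\rho$ reverse inclusions, and the unit inequalities $X \subseteq \rho(\lambda(X))$ and $Y \subseteq \lambda(\rho(Y))$ hold (the latter directly from the definitions, since every $f \in Y$ kills each element of $\rho(Y) = \bigcap_{g \in Y}\Ker g$).

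I would first deduce (1). Applying the inclusion-reversing $\lambda$ to $X \subseteq \rho(\lambda(X))$ gives $\lambda(\rho(\lambda(X))) \subseteq \lambda(X)$, while the unit inequality $Y \subseteq \lambda(\rho(Y))$ with $Y = \lambda(X)$ gives $\lambda(X) \subseteq \lambda(\rho(\lambda(X)))$; together these yield $\lambda\rho\lambda = \lambda$, which is precisely the claim $\mathbf{l}_S(\bigcap_{f\in\mathbf{l}_S(X)}\Ker f) = \mathbf{l}_S(X)$. Statement (2) is the mirror image: applying $\rho$ to $Y \subseteq \lambda(\rho(Y))$ and using $X \subseteq \rho(\lambda(X))$ with $X = \rho(Y)$ yields $\rho\lambda\rho = \rho$, and unwinding the abbreviations turns this into the displayed equality of (2). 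Both arguments are purely formal and, in particular, do not use the projectivity hypothesis.

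Finally I would derive (3) from (1) and (2) by carrying chains across the connection. For the forward implication, assume the acc on annihilators and take a descending chain $\lambda(X_1) \supseteq \lambda(X_2) \supseteq \cdots$. Applying $\rho$ produces an ascending chain $\rho(\lambda(X_1)) \subseteq \rho(\lambda(X_2)) \subseteq \cdots$ whose terms are annihilator submodules (each is $\bigcap_{f\in Y}\Ker f$ for $Y = \lambda(X_n) \subseteq S$), so it stabilizes; applying $\lambda$ to the resulting equalities and invoking (1) in the form $\lambda\rho\lambda = \lambda$ returns $\lambda(X_n) = \lambda(X_{n+i})$, so the original chain stabilizes. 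The converse is symmetric: start from an ascending chain of annihilators $\rho(Y_1) \subseteq \rho(Y_2) \subseteq \cdots$, map it under $\lambda$ to a descending chain of members of $\{\mathbf{l}_S(X) \mid X \subseteq M\}$, stabilize by the assumed dcc, and pull the stabilization back with (2) in the form $\rho\lambda\rho = \rho$. I do not expect a genuine obstacle here; the only point needing care is the bookkeeping that the submodules produced by $\rho$ are exactly the annihilators featured in the acc (and dually for $\lambda$), so that $\lambda$ and $\rho$ restrict to an order-reversing bijection between the two families of closed objects and thereby convert ascending chains into descending ones.
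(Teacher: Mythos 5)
Your proposal is correct and is essentially the paper's own argument: the paper proves (1) and (2) by exactly the two unit inequalities $X\subseteq\bigcap_{f\in\mathbf{l}_S(X)}\Ker f$ and $Y\subseteq\mathbf{l}_S\bigl(\bigcap_{f\in Y}\Ker f\bigr)$ together with order reversal, and deduces (3) from them, just without naming the Galois connection. Your observation that the projectivity hypothesis is not needed here is consistent with the paper's proof, which also never invokes it.
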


\begin{proof}

\textit{(1)} Since $X\subseteq\bigcap_{f\in\mathbf{l}_S(X)}\Ker f$, we have that $\mathbf{l}_S\left( \bigcap_{f\in\mathbf{l}_S(X)}\Ker f\right)\subseteq\mathbf{l}_S(X)$. The other inclusion is obvious.

\textit{(2)} It is clear that $\bigcap_{f\in Y}\Ker f\subseteq\bigcap\left\lbrace \Ker g\mid g\in\mathbf{l}_S\left( \bigcap_{f\in Y}\Ker f\right) \right\rbrace$. On the other hand, since $Y\subseteq\mathbf{l}_S\left( \bigcap_{f\in Y}\Ker f\right)$, we have the other inclusion.

\textit{(3)} It is clear from \textit{(1)} and \textit{(2)}.
\end{proof}

%

\begin{prop}\label{factorrightacc}
Suppose $M$ is $M^{(\Lambda)}$-projective for every index set $\Lambda$ and that $M$ satisfies acc on annihilators. Let $\ann^r_M(N)$ be a right annihilator in $M$. Then $\overline{M}=M/\ann^r_M(N)$ satisfies acc on annihilators.
\end{prop}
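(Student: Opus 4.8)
The plan is to reduce the assertion to a statement about left annihilators in $S=\End_R(M)$ and then feed it into the dcc that $S$ enjoys by Proposition \ref{maccsacc}. Write $P=\ann^r_M(N)$ and $\overline{M}=M/P$. Since $M$ is $M^{(\Lambda)}$-projective for every $\Lambda$, the Remark after the definition of $\ann^r_M$ gives that $P$ is fully invariant, so $\overline{M}$ is again $\overline{M}^{(\Lambda)}$-projective for every $\Lambda$, the projection $\pi:M\to\overline{M}$ induces an $\overline{f}\in\overline{S}:=\End_R(\overline{M})$ for each $f\in S$, and quasi-projectivity makes $\varphi:S\to\overline{S}$, $f\mapsto\overline{f}$, a surjective ring homomorphism (this is the same mechanism already used in Lemmas \ref{epiproduct} and \ref{nilpsubnil}). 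By Lemma \ref{dccl}\textit{(3)} applied to $\overline{M}$, it suffices to show that $\{\mathbf{l}_{\overline{S}}(\overline{X})\mid \overline{X}\subseteq\overline{M}\}$ satisfies dcc.

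The crux is to pull a left annihilator in $\overline{S}$ back to a genuine left annihilator in $S$. Fix $\overline{X}\subseteq\overline{M}$ and set $X=\pi^{-1}(\langle\overline{X}\rangle)$, a submodule of $M$ with $P\leq X$ and $\pi(X)=\langle\overline{X}\rangle$. Then $\varphi^{-1}(\mathbf{l}_{\overline{S}}(\overline{X}))=\{f\in S\mid \pi(f(X))=0\}=\{f\in S\mid f(X)\subseteq P\}$, and I claim this equals $\mathbf{l}_S(N_MX)$. Since $\ann^r_M(N)$ is the largest submodule $K$ with $N_MK=0$, the condition $f(X)\subseteq P$ is equivalent to $N_Mf(X)=0$. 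Now, because $M$ is projective in $\sm$ and $X,f(X)\in\sm$, $M$ is $X$-projective, so every $h\in\Hom_R(M,f(X))$ factors through the epimorphism $f|_X:X\to f(X)$ as $h=fg$ with $g\in\Hom_R(M,X)$; hence
\[N_Mf(X)=\sum\{fg(N)\mid g\in\Hom_R(M,X)\}=f\Big(\sum\{g(N)\mid g\in\Hom_R(M,X)\}\Big)=f(N_MX).\]
Thus $f(X)\subseteq P$ if and only if $f(N_MX)=0$, giving $\varphi^{-1}(\mathbf{l}_{\overline{S}}(\overline{X}))=\mathbf{l}_S(N_MX)$, a left annihilator in $S$.

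With this identity the proof closes quickly. Given a descending chain $\mathbf{l}_{\overline{S}}(\overline{X}_1)\supseteq\mathbf{l}_{\overline{S}}(\overline{X}_2)\supseteq\cdots$, pick $X_i=\pi^{-1}(\langle\overline{X}_i\rangle)$ as above; applying $\varphi^{-1}$ produces a descending chain $\mathbf{l}_S(N_MX_1)\supseteq\mathbf{l}_S(N_MX_2)\supseteq\cdots$ of left annihilators in $S$. By Proposition \ref{maccsacc}, $S$ satisfies acc on right annihilators, equivalently dcc on left annihilators, so this chain stabilizes; since $\varphi$ is onto and $\mathbf{l}_{\overline{S}}(\overline{X}_i)=\varphi(\varphi^{-1}(\mathbf{l}_{\overline{S}}(\overline{X}_i)))$, the original chain stabilizes as well, which is the required dcc. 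I expect the identity $N_Mf(X)=f(N_MX)$ to be the one substantive point: it is exactly what turns the geometric condition $f(X)\subseteq\ann^r_M(N)$ into membership in an honest left annihilator of $S$, and it depends essentially on the relative projectivity of $M$ in $\sm$ used to lift maps into $f(X)$ along $f|_X$. The remaining manipulations are bookkeeping with the anti-isomorphism between left and right annihilators of $S$ and the translation furnished by Lemma \ref{dccl}.
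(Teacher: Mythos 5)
Your proof follows the same route as the paper's: identify $\varphi^{-1}\bigl(\mathbf{l}_{\overline{S}}(\overline{X})\bigr)$ with $\mathbf{l}_S(N_MX)$ via the identity $N_Mf(X)=f(N_MX)$ (which is exactly Lemma \ref{fprod}, re-derived rather than cited) and the maximality of $\ann^r_M(N)$, then transport the chain condition through the surjection $\varphi:S\to\overline{S}$. That part is sound.

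The one genuine misstep is the final citation. The chain $\mathbf{l}_S(N_MX_1)\supseteq\mathbf{l}_S(N_MX_2)\supseteq\cdots$ consists of left annihilators \emph{of subsets of the module $M$} in $S$, whereas Proposition \ref{maccsacc} gives dcc on left annihilators \emph{of subsets of the ring $S$}, i.e.\ on the family $\{\ann_S^\ell(Y)\mid Y\subseteq S\}$. These families need not coincide: one always has $\ann_S^\ell(Y)=\mathbf{l}_S\bigl(\sum_{g\in Y}g(M)\bigr)$, so every ring-theoretic left annihilator is an $\mathbf{l}_S(X)$, but the reverse containment can fail (a set such as $N_MX$ need not be of the form $\sum_{g\in Y}g(M)$ without extra hypotheses), so dcc on the smaller family does not by itself stabilize your chain. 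The correct tool is the one you already invoked for $\overline{M}$ at the start, namely Lemma \ref{dccl}\textit{(3)} applied to $M$ itself: acc on annihilators of $M$ is equivalent to dcc on $\{\mathbf{l}_S(X)\mid X\subseteq M\}$, which contains your chain. With that single substitution your argument coincides with the paper's proof.
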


\begin{proof}
Since $M$ is $M^{(\Lambda)}$-projective for every index set $\Lambda$, there is a surjetive ring homomorphism $\rho:\End_R(M)\to \End_R(\overline{M})$. Let $\overline{X}$ be any subset of $\overline{M}$ and set $\overline{S}=\End_R(\overline{M})$. We claim that the inverse image under $\rho$ of $\mathbf{l}_{\overline{S}}(\overline{X})$ is $\mathbf{l}_S(N_M\left\langle X\right\rangle)$ where $X$ is the inverse image of $\overline{X}$ under the canonical projection $\pi:M\to \overline{M}$. For each $\bar{f}\in\mathbf{l}_{\overline{S}}(\overline{X})$ there is an $f\in\End_R(M)$ such that $f(X)\subseteq \ann^r_M(N)$. Note that $f(\left\langle X\right\rangle)=\left\langle f(X)\right\rangle \leq \ann^r_M(N)$. By Lemma \ref{fprod}, $f(N_M\left\langle X\right\rangle )=N_Mf(\left\langle X\right\rangle )=0$. Therefore, $f\in\mathbf{l}_S(N_M\left\langle X\right\rangle )$. This implies that $\rho^{-1}(\mathbf{l}_{\overline{S}}(\overline{X}))\subseteq\mathbf{l}_S(N_M\left\langle X\right\rangle )$. Now, let $g\in\mathbf{l}_S(N_M\left\langle X\right\rangle )$. Again by Lemma \ref{fprod}, $0=g(N_M\left\langle X\right\rangle )=N_Mg(\left\langle X\right\rangle )$. Hence $g(\left\langle X\right\rangle )\leq \ann^r_M(N)$. Therefore, $\rho(g)(\overline{X})=\rho(g)\pi(X)=\pi g(X)=0$. Thus, $\mathbf{l}_S(N_M\left\langle X\right\rangle )\subseteq\rho^{-1}(\mathbf{l}_{\overline{S}}(\overline{X}))$, proving the claim. It follows from Lemma \ref{dccl} that $\overline{M}$ satisfies acc on annihilators.
\end{proof}

Let $M$ be a module and $K\leq N\leq M$. Let $\mathbf{l}_N(K)$ denote the intersection $\ann_M(K)\cap N$. On the other hand, let $\mathbf{r}_N(K)=\ann^r_M(K)\cap N$. If $N$ is a fully invariant submodule of $M$ and $K\leq N$, then $\mathbf{l}_N(K)$ and $\mathbf{r}_N(K)$ are fully invariant in $M$.

\begin{cor}\label{dccrn}
Let $M$ be a module. If $M$ satisfies acc on annihilators then the set $\{\mathbf{l}_N(K)\mid K\leq N\}$ satisfies acc and the set $\{\mathbf{r}_N(K)\mid K\leq N\}$ satisfies dcc.
\end{cor}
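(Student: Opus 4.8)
The plan is to prove the two assertions by parallel telescoping arguments: in each case I convert the (a priori non-monotone) family $\{K_i\}$ of submodules of $N$ into a \emph{monotone} family of genuine annihilators, to which an already-available chain condition applies, and then recover the chain of $\mathbf{l}_N(K_i)$ (resp. $\mathbf{r}_N(K_i)$) by intersecting with $N$. The two basic chain conditions I would invoke are the dcc on right annihilators $\{\ann^r_M(K)\mid K\le M\}$ from Lemma \ref{dccannr}, and the acc on the kernel-annihilators $\{\ann_M(K)\mid K\le M\}$, the latter being immediate from the definition of acc on annihilators because $\ann_M(K)=\bigcap_{f\in\Hom_R(M,K)}\Ker f$. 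The conversion is powered by Lemma \ref{rannintersection}, $\bigcap_i\ann^r_M(K_i)=\ann^r_M(\sum_i K_i)$, together with its left-hand analogue $\bigcap_i\ann_M(K_i)=\ann_M(\sum_i K_i)$.

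For the dcc statement, I would start from a descending chain $\mathbf{r}_N(K_1)\supseteq\mathbf{r}_N(K_2)\supseteq\cdots$ and put $L_n=\sum_{i=1}^n K_i$, an ascending family. By Lemma \ref{rannintersection} and $\mathbf{r}_N(K)=\ann^r_M(K)\cap N$ one gets $\ann^r_M(L_n)\cap N=\bigcap_{i=1}^n\mathbf{r}_N(K_i)=\mathbf{r}_N(K_n)$, the last equality holding because the chain is descending. Since $L_n$ is ascending, $\ann^r_M(L_1)\supseteq\ann^r_M(L_2)\supseteq\cdots$ is a descending chain of right annihilators, which stabilizes by Lemma \ref{dccannr}; intersecting with $N$ forces $\mathbf{r}_N(K_n)$ to stabilize.

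For the acc statement I would run the mirror-image argument, using tails instead of initial segments. Given an ascending chain $\mathbf{l}_N(K_1)\subseteq\mathbf{l}_N(K_2)\subseteq\cdots$, I set $T_n=\sum_{i\ge n}K_i$, now a \emph{descending} family of submodules. The left analogue of Lemma \ref{rannintersection} gives $\ann_M(T_n)=\bigcap_{i\ge n}\ann_M(K_i)$, so that $\ann_M(T_n)\cap N=\bigcap_{i\ge n}\mathbf{l}_N(K_i)=\mathbf{l}_N(K_n)$, the last equality now holding because the chain is ascending. Since $T_n$ is descending, $\ann_M(T_1)\subseteq\ann_M(T_2)\subseteq\cdots$ is an ascending chain of kernel-annihilators, which stabilizes by the acc on annihilators; intersecting with $N$ forces $\mathbf{l}_N(K_n)$ to stabilize.

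The order-reversal of $\ann_M$ and $\ann^r_M$ and the telescoping of a monotone chain through a finite (resp. tail) intersection are routine. The step that I expect to be the main obstacle is the left-annihilator identity $\bigcap_i\ann_M(K_i)=\ann_M(\sum_i K_i)$ underlying the acc half, especially since the tail $T_n$ is an \emph{infinite} sum. The easy inclusion is order-reversal. For the reverse inclusion, writing $L=\bigcap_i\ann_M(K_i)$, one has $L_M K_i=0$ for every $i$ from the maximality characterization of $\ann_M(K_i)$ recorded in the remark following the definition of the annihilators; the genuine content is then the passage $L_M K_i=0\ (\forall i)\Rightarrow L_M(\sum_i K_i)=0$, i.e.\ distributivity of the product over arbitrary sums in its second variable, and this is exactly where the standing hypothesis that $M$ is $M^{(\Lambda)}$-projective for every index set $\Lambda$ is needed. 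With $L_M(\sum_i K_i)=0$ in hand, $L\subseteq\ann_M(\sum_i K_i)$ follows from the maximality of the left annihilator, completing the identity and hence the argument.
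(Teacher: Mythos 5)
Your argument is correct, but it is not the route the paper takes. The paper proves the double-annihilator identity $\mathbf{l}_N(K)=\ann_M(\ann^r_M(\mathbf{l}_N(K)))\cap N$ and then transports an ascending chain of $\mathbf{l}_N(K_i)$ through $\ann^r_M(-)$ into a descending chain of right annihilators, which stabilizes by Lemma \ref{dccannr}; applying $\ann_M(-)\cap N$ recovers the original chain, and the dcc half is declared analogous. You instead monotonize the (a priori unrelated) family $\{K_i\}$ via partial sums, respectively tails, and use $\ann(\sum)=\bigcap\ann$ to identify each term of the given chain with a genuine annihilator of a monotone family; the acc half then appeals directly to the defining acc on kernel-annihilators, while the dcc half goes through Lemma \ref{rannintersection} and Lemma \ref{dccannr}. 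The paper's approach buys uniformity (the same Galois-connection trick, applied twice, with no left-hand analogue of Lemma \ref{rannintersection} needed); yours buys a more direct reduction for the acc half and makes the chain condition being used transparent, at the cost of having to establish the infinite-sum identity $\ann_M\left(\sum_i K_i\right)=\bigcap_i\ann_M(K_i)$. You correctly isolate that this identity rests on distributivity of the product over arbitrary sums in its second variable, hence on the standing hypothesis that $M$ is $M^{(\Lambda)}$-projective for every $\Lambda$ — a hypothesis absent from the statement of the corollary. This is not a defect of your proof relative to the paper's: the paper's own argument already uses $X_M\ann^r_M(X)=0$ (and Lemma \ref{rannintersection} uses ${N_i}_M\ann^r_M(N_i)=0$), which by the paper's own remark also requires that projectivity hypothesis; you are simply more explicit about where it enters.
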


\begin{proof}
Let $K\leq N\leq M$. We claim that $\mathbf{l}_N(K)=\ann_M(\ann^r_M(\mathbf{l}_N(K)))\cap N$. Since $\mathbf{l}_N(K)_M\ann^r_M(\mathbf{l}_N(K))=0$, we have that $\mathbf{l}_N(K)\subseteq \ann_M(\ann^r_M(\mathbf{l}_N(K)))\cap N$. On the other hand, $\mathbf{l}_N(K)\subseteq \ann_M(K)$. Hence $\ann^r_M(\mathbf{l}_N(K))\supseteq \ann^r_M(\ann_M(K))$. Therefore, $\ann_M(\ann^r_M(\mathbf{l}_N(K)))\subseteq \ann_M(\ann^r_M(\ann_M(K)))=\ann_M(K)$. Thus, $\ann_M(\ann^r_M(\mathbf{l}_N(K)))\cap N\subseteq \ann_M(K)\cap N=\mathbf{l}_N(K)$ proving the claim. Now, if 
\[\mathbf{l}_N(K_1)\subseteq\mathbf{l}_N(K_2)\subseteq\mathbf{l}_N(K_3)\subseteq\cdots\]
is an ascending chain, applying $\ann^r(-)$ to the chain, we get a descending chain
\[\ann^r_M(\mathbf{l}_N(K_1))\supseteq \ann^r_M(\mathbf{l}_N(K_2))\supseteq \ann^r_M(\mathbf{l}_N(K_3))\supseteq\cdots\]
By Lemma \ref{dccannr}, there exists $\ell>0$ such that $\ann^r_M(\mathbf{l}_N(K_\ell))= \ann^r_M(\mathbf{l}_N(K_{\ell+i}))$ for all $i\geq 0$. It follows that
\[\mathbf{l}_N(K_\ell)=\ann_M(\ann^r_M(\mathbf{l}_N(K_\ell)))\cap N = \ann_M(\ann^r_M(\mathbf{l}_N(K_{\ell+i})))\cap N=\mathbf{l}_N(K_{\ell+i})\]
for all $i\geq 0$. Analogously, the set $\{\mathbf{r}_N(K)\mid K\leq N\}$ satisfies dcc.
\end{proof}

\begin{lemma}\label{rannncero}
Suppose $M$ is $M^{(\Lambda)}$-projective for every index set $\Lambda$ and let $N< M$ be a fully invariant nil-submodule. If $M$ 
satisfies acc on annihilators, then $\mathbf{r}_N(N)\neq 0$.
\end{lemma}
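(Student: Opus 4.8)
The plan is to reduce the computation of $\ann^r_M(N)$ to that of a \emph{finitely generated} submodule, where the finitely generated nilpotency machinery of Lemma \ref{fgnilp} becomes available. Throughout I assume $N\neq 0$, since otherwise $\mathbf{r}_N(N)=0$ and the statement is vacuous. First, since $N$ is a fully invariant nil-submodule of $M$ and $M$ satisfies acc on annihilators, Proposition \ref{accnillocnil} shows that $N$ is locally nilpotent; hence every finitely generated submodule of $N$ is again locally nilpotent, and by Lemma \ref{fgnilp} is in fact nilpotent. Writing $N=\sum_{n\in N}Rn$, Lemma \ref{rannintersection} gives $\ann^r_M(N)=\bigcap_{n\in N}\ann^r_M(Rn)$, so the goal is to produce a nonzero submodule of $N$ lying inside this intersection.

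The central step is a minimality argument. Consider the family $\{\ann^r_M(F)\mid F\leq N,\ F \text{ finitely generated}\}$, which is contained in the set of right annihilators of $M$ and therefore, by Lemma \ref{dccannr}, satisfies the descending chain condition. Choose $F_0$ so that $\ann^r_M(F_0)$ is minimal in this family. Since for any $0\neq n\in N$ one has $(Rn)_MM\supseteq Rn\neq0$, the annihilator $\ann^r_M(Rn)$ is a proper submodule of $M=\ann^r_M(0)$; consequently the minimal value $\ann^r_M(F_0)$ is proper, which forces $F_0\neq0$. I then claim $\ann^r_M(F_0)=\ann^r_M(N)$. For each $n\in N$ the submodule $F_0+Rn$ is finitely generated, and Lemma \ref{rannintersection} gives $\ann^r_M(F_0+Rn)=\ann^r_M(F_0)\cap\ann^r_M(Rn)\subseteq\ann^r_M(F_0)$; minimality forces equality, so $\ann^r_M(F_0)\subseteq\ann^r_M(Rn)$. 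As $n$ was arbitrary, $\ann^r_M(F_0)\subseteq\bigcap_{n\in N}\ann^r_M(Rn)=\ann^r_M(N)$, while the reverse inclusion is immediate from $F_0\subseteq N$.

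Finally I exploit that $F_0$ is finitely generated. Being a finitely generated locally nilpotent submodule, $F_0$ is nilpotent by Lemma \ref{fgnilp}; let $t\geq1$ be minimal with $F_0^t=0$. As $F_0\neq0$ we have $t\geq2$, so $F_0^{t-1}\neq0$, and from ${F_0}_MF_0^{t-1}=F_0^t=0$ together with the characterization of $\ann^r_M(F_0)$ as the largest submodule annihilated on the right by $F_0$, we obtain $F_0^{t-1}\subseteq\ann^r_M(F_0)=\ann^r_M(N)$. Since $F_0\subseteq N$ and $N$ is fully invariant, $N_MN\subseteq N$ and hence all powers of $F_0$ remain in $N$, so $0\neq F_0^{t-1}\subseteq \ann^r_M(N)\cap N=\mathbf{r}_N(N)$. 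The step I expect to be the crux is the descending chain reduction that replaces $N$ by a finitely generated $F_0$ with the same right annihilator: local nilpotency alone does not make $N$ nilpotent (witness $_{\mathbb{Z}}\mathbb{Q}$), so passing to a finitely generated piece is precisely what unlocks Lemma \ref{fgnilp} and produces a genuine nonzero submodule annihilated on the right by all of $N$.
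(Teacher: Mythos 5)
Your proof is correct and follows essentially the same route as the paper's: the paper likewise picks a minimal element of the family $\{\mathbf{r}_N(K)\mid K\leq N,\ K\text{ finitely generated}\}$ (invoking Corollary \ref{dccrn} where you invoke Lemma \ref{dccannr}), uses the $K+Rn$ trick to show this minimal value lies in $\mathbf{r}_N(N)$, and then applies Proposition \ref{accnillocnil} together with Lemma \ref{fgnilp} to exhibit the nonzero power $K^{\ell-1}$ inside it. The only cosmetic difference is that you work with $\ann^r_M(-)$ directly rather than with $\mathbf{r}_N(-)=\ann^r_M(-)\cap N$.
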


\begin{proof}
Set $\Gamma=\{\mathbf{r}_N(K)\mid K\leq N\text{ and } K \text{ finitely generated}\}$. It  follows from Corollary \ref{dccrn} that $\Gamma$ has minimal elements. Let $\mathbf{r}_N(K)$ be a minimal element in $\Gamma$ and let $n\in N$. Then $K+Rn$ is a finitely generated submodule of $N$ and so $\mathbf{r}_N(K)=\mathbf{r}_N(K+Rn)\subseteq\mathbf{r}_N(Rn)$. Hence, $\mathbf{r}_N(K)\subseteq\bigcap_{n\in N}\mathbf{r}_N(Rn)=\bigcap_{n\in N}\ann^r_M(Rn)\cap N=\ann^r_M(\sum_{n\in N}Rn)\cap N=\ann^r_M(N)\cap N=\mathbf{r}_N(N)$ by Lemma \ref{rannintersection}. 
Now, by Lemma \ref{fgnilp} and Proposition \ref{accnillocnil}, $K$ is nilpotent, that is, there exists $\ell>1$ such that $K^\ell=0$ but $K^{\ell-1}\neq 0$. Therefore $0\neq K^{\ell-1}\subseteq\mathbf{r}_N(K)$.
\end{proof}

\begin{prop}\label{subm}
Suppose $M$ is $M^{(\Lambda)}$-projective for every index set $\Lambda$ and let $N<M$ be a fully invariant nil-submodule such that $\mathbf{l}_N(N^j)=0$ for all $j>0$. If $M$ 
satisfies acc on annihilators, then there exist submodules $A_1,...,A_k,...\subseteq N$ such that:
\begin{enumerate}
\item ${A_1}_M\cdots_M{A_k}\neq 0$ for all $k>0$; and
\item ${A_1}_M\cdots_M{A_k}_MA_n=0$ if $n\leq k$.
\end{enumerate}
\end{prop}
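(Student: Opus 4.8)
The plan is to build the $A_i$ recursively, carrying along the inductive hypothesis that $P_k:={A_1}_M\cdots_M A_k$ is a \emph{nonzero} submodule of $N$ and that ${P_j}_MA_n=0$ whenever $n\le j\le k$. Two structural facts do all the work. First, for any submodules $X,Y\le M$ we have $X_MY=\sum\{f(X)\mid f\in\Hom_R(M,Y)\}\subseteq Y$; in particular ${A_{k+1}}_MA_n\subseteq A_n$ and ${P_k}_MA_{k+1}\subseteq A_{k+1}\subseteq N$, so every partial product stays inside $N$. Second, since $M$ is $M^{(\Lambda)}$-projective for every $\Lambda$, the product is associative and monotone in each variable, and it distributes over sums in its second variable.

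The key observation is that, granting the inductive hypothesis at level $k$, condition $(2)$ for the indices $n\le k$ is automatic for \emph{any} choice of $A_{k+1}\subseteq N$: by associativity, monotonicity and ${A_{k+1}}_MA_n\subseteq A_n$,
\[ {P_{k+1}}_MA_n={P_k}_M({A_{k+1}}_MA_n)\subseteq {P_k}_MA_n=0. \]
Hence, to pass from $k$ to $k+1$, only two things remain to be arranged: the nonvanishing ${P_k}_MA_{k+1}\neq0$ (which is condition $(1)$), and the single diagonal relation ${P_{k+1}}_MA_{k+1}={P_k}_M A_{k+1}^2=0$ (condition $(2)$ for $n=k+1$).

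To secure both I would use a ``power trick''. Since $P_k\neq0$, $P_k\le N$, and $\mathbf{l}_N(N)=\ann_M(N)\cap N=0$ (the hypothesis for $j=1$), we get $P_k\nsubseteq\ann_M(N)$, hence ${P_k}_MN\neq0$; distributivity in the second variable then yields some $n\in N$ with ${P_k}_MRn\neq0$. As $N$ is a fully invariant nil-submodule of a module satisfying acc on annihilators, $N$ is locally nilpotent (Proposition \ref{accnillocnil}), so the cyclic submodule $Rn$ is nilpotent (Lemma \ref{fgnilp}). Let $t\ge1$ be maximal with ${P_k}_M(Rn)^t\neq0$ and set $A_{k+1}=(Rn)^t\subseteq N$. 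Then ${P_k}_MA_{k+1}\neq0$, while $A_{k+1}^2=(Rn)^{2t}$ with $2t\ge t+1$ gives ${P_{k+1}}_MA_{k+1}={P_k}_M(Rn)^{2t}\subseteq{P_k}_M(Rn)^{t+1}=0$ by maximality of $t$. Since ${P_{k+1}}=P_k{}_MA_{k+1}\neq0$ lies in $A_{k+1}\subseteq N$, the inductive hypothesis is restored and the recursion continues. The base case $A_1$ is produced the same way (a top nonzero power of a cyclic submodule of $N$, so $A_1\neq0$ and $A_1^2=0$), or directly from $\mathbf{r}_N(N)\neq0$ via Lemma \ref{rannncero}.

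The conceptual heart of the argument is the second paragraph: realizing that the entire family of relations in $(2)$ collapses to one diagonal relation plus nonvanishing, purely because ${A_{k+1}}_MA_n\subseteq A_n$. After that, the only genuine work is the power trick, which is the module analogue of choosing a nilpotent element of exactly the right index, and the hypothesis that is really needed to prevent the recursion from stalling is $\mathbf{l}_N(N)=0$, guaranteeing ${P_k}_MN\neq0$ at every stage. The minor technical points I would check are the distributivity of the product over the sum $N=\sum_{n}Rn$ (a consequence of the projectivity of $M$ in $\sigma[M]$) needed to extract a \emph{cyclic}, hence nilpotent, factor, together with the routine identities $A_{k+1}^2=(Rn)^{2t}$ and $(Rn)^{2t}\subseteq(Rn)^{t+1}$, both immediate from associativity and the listed properties of the product.
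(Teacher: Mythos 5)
Your argument is correct, and it shares the paper's overall inductive skeleton: build the $A_i$ one at a time, keep the running product $P_k={A_1}_M\cdots_M{A_k}$ nonzero, and observe that the relations ${P_{k+1}}_MA_n=0$ for $n\le k$ are automatic from ${A_{k+1}}_MA_n\subseteq A_n$, associativity and monotonicity, so that only ${P_k}_MA_{k+1}\neq 0$ and the diagonal relation ${P_k}_M{A_{k+1}}_MA_{k+1}=0$ need to be arranged. Where you genuinely diverge is in how $A_{k+1}$ is produced. The paper takes $A_1=\mathbf{r}_N(N)$ and $A_{i+1}=\mathbf{r}_N({A_1}_M\cdots_M{A_i}_MN)$, and certifies ${P_i}_MA_{i+1}\neq 0$ by passing to the quotient $\overline{M}=M/\ann^r_M(P_i)$ and applying Lemma \ref{rannncero} there; this is precisely what Lemma \ref{factornil}, Proposition \ref{factorrightacc} and Corollary \ref{dccrn} are set up to support. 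You instead use $\mathbf{l}_N(N)=0$ to get ${P_k}_MN\neq 0$, distribute the product over $N=\sum_{n\in N}Rn$ to extract a cyclic $Rn$ with ${P_k}_MRn\neq 0$, and set $A_{k+1}=(Rn)^t$ for the largest $t$ with ${P_k}_M(Rn)^t\neq 0$, where local nilpotency of $N$ (Proposition \ref{accnillocnil}) guarantees such a maximal $t$ exists and maximality kills ${P_k}_M(Rn)^{2t}$. Your route bypasses the right-annihilator and quotient-module machinery entirely (Lemma \ref{rannncero}, Proposition \ref{factorrightacc} and Lemma \ref{rannintersection} are never invoked), uses only the $j=1$ instance of the hypothesis $\mathbf{l}_N(N^j)=0$, and yields concrete cyclic-power factors; the price is the appeal to distributivity of the product over an infinite sum in the second variable, which is where the $M^{(\Lambda)}$-projectivity enters your argument and which you correctly flag. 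The paper's construction, by contrast, mirrors Lanski's ring-theoretic choice of maximal right annihilators and keeps the argument within the annihilator calculus developed in Section \ref{AccandNil}. Both proofs ultimately rest on the same input, namely that acc on annihilators forces the fully invariant nil-submodule $N$ to be locally nilpotent.
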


\begin{proof}
We construct the submodules inductively. Let $A_1=\mathbf{r}_N(N)\neq 0$ by Lemma \ref{rannncero}. Note that ${A_1}_M{A_1}=0$. To get $A_2$, consider $\ann^r_M(A_1)$. Then $\frac{N+\ann^r_M(A_1)}{\ann^r_M(A_1)}$ is a nil-submodule of $M/\ann^r_M(A_1)$ by Lemma \ref{factornil}. Moreover, $M/\ann^r_M(A_1)$ 
satisfies acc on annihilators by 
Proposition \ref{factorrightacc}. Write $\overline{N}=\frac{N+\ann^r_M(A_1)}{\ann^r_M(A_1)}$ and $\overline{M}=M/\ann^r_M(A_1)$. By Lemma \ref{rannncero}, $\mathbf{r}_{\overline{N}}(\overline{N})\neq 0$. Hence there exists $0\neq T/\ann^r_M(A_1)\leq \overline{M}$ such that $\overline{N}_{\overline{M}}\left( T/\ann^r_M(A_1)\right) =0$. This implies that $N_MT\subseteq \ann^r_M(A_1)$ and so ${A_1}_M{N}_MT=0$ but ${A_1}_MT\neq 0$. Let $A_2=\mathbf{r}_N({A_1}_MN)$. Therefore ${A_1}_M{A_2}\neq 0$. We have that ${A_1}_M{A_2}\subseteq N$, then ${A_1}_M{A_2}_M{A_1}=0$. On the other hand, ${A_1}_M{A_2}\subseteq {A_1}_MN$. Hence, ${A_1}_M{A_2}_M{A_2}\subseteq {A_1}_MN_M{A_2}=0$ because definition of $A_2$. Inductively, each $A_{i+1}=\mathbf{r}_N({A_1}_M\cdots_M{A_i}_MN)$.
\end{proof}

\section{Nilpotency of the prime radical of a Goldie module}\label{Nilpotency}

In this section, the main theorem is proved and as corollary we get that the prime radical of a Goldie module $M$ is nilpotent. In \cite[Theorem 1]{lanski1969nil} Lanski proved that nil subrings of a left Goldie ring are nilpotent. We will make use of this result applied to the endomorphism ring of a Goldie module. For, we start with the following lemmas.

\begin{lemma}\label{accmoduloann}
	Let $M$ be a quasi-projective module with endomorphism ring $S=\End_R(M)$ and $N=\bigcap_{f\in I}\Ker f$ for some ideal $I$ of $S$. If $M$ satisfies acc on annihilators then so does $M/N$. 
\end{lemma}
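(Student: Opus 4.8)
The plan is to reduce acc on annihilators of $M/N$ to acc on annihilators of $M$ by exhibiting an order-preserving bijection between the annihilator submodules of $\overline{M}=M/N$ and the annihilator submodules of $M$ that contain $N$. The whole argument runs parallel to Proposition \ref{factorrightacc}, only the relevant computation is more direct here because $N$ is already an intersection of kernels.

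First I would record that $N$ is fully invariant in $M$. Indeed, since $I$ is an ideal of $S$, for any $g\in S$ and any $f\in I$ we have $fg\in I$, so $f(g(N))=(fg)(N)=0$; as this holds for every $f\in I$, we get $g(N)\subseteq N$. Full invariance lets each $g\in S$ descend to an endomorphism $\rho(g)$ of $\overline{M}$, giving a ring homomorphism $\rho\colon S\to \overline{S}:=\End_R(M/N)$. Because $M$ is quasi-projective, $\rho$ is surjective: given $\bar g\in\overline{S}$, the canonical projection $\pi\colon M\to M/N$ is an epimorphism, and quasi-projectivity lifts $\bar g\pi$ to some $g\colon M\to M$ with $\pi g=\bar g\pi$, whence $\rho(g)=\bar g$.

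The heart of the proof is the computation of annihilators in $\overline{M}$. For $g\in S$ one has $\Ker\rho(g)=g^{-1}(N)/N$, and using $N=\bigcap_{h\in I}\Ker h$ together with $\Ker(h\circ g)=g^{-1}(\Ker h)$ gives
\[
g^{-1}(N)=\bigcap_{h\in I}\Ker(hg).
\]
Since $\rho$ is onto, an arbitrary annihilator of $\overline{M}$ has the form $\bigcap_{\bar g\in \overline{Y}}\Ker\bar g=\bigcap_{g\in Y}\Ker\rho(g)$ for some $Y\subseteq S$, and by the displayed identity this equals $A/N$, where $A=\bigcap_{g'\in IY}\Ker g'$ and $IY=\{hg\mid h\in I,\ g\in Y\}$. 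Thus $A$ is a genuine annihilator submodule of $M$, and a short check (using $h(N)=0$ for $h\in I$ and full invariance of $N$) shows $N\subseteq A$.

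Finally I would conclude by the correspondence theorem: the annihilators of $\overline{M}$ are exactly the submodules $A/N$ with $A$ an annihilator of $M$ containing $N$, and $A/N\mapsto A$ is an order isomorphism onto this subfamily. An ascending chain of annihilators of $\overline{M}$ therefore lifts to an ascending chain of annihilators of $M$, which stabilizes by hypothesis, so the original chain in $\overline{M}$ stabilizes; hence $M/N$ satisfies acc on annihilators. The one point needing care — and where quasi-projectivity is essential — is the surjectivity of $\rho$, since without it a general endomorphism of $M/N$ need not come from $S$ and its kernel would not obviously pull back to an annihilator of $M$; the identity for $g^{-1}(N)$ is the computational step that actually converts a quotient annihilator into an honest annihilator in $M$.
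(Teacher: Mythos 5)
Your proof is correct and follows essentially the same route as the paper: both lift endomorphisms of $M/N$ to $M$ via quasi-projectivity and then compose with elements of the ideal $I$ to realize a quotient annihilator $A/N$ as a genuine annihilator $A=\bigcap\Ker(hg)$ in $M$. The only difference is presentational — you package the key step as the identity $g^{-1}(N)=\bigcap_{h\in I}\Ker(hg)$ and an order isomorphism, while the paper verifies the two inclusions for $A$ directly.
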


\begin{proof}
	Note that $N$ is a fully invariant submodule of $M$ because $I$ is an ideal. It is enough to show that inverse image, under the canonical projection $\pi:M\to M/N$, of an annihilator in $M/N$ is an annihilator in $M$. Let $\overline{Y}\subseteq\End_R(M/N)$. Consider $\bigcap_{\bar{f}\in\overline{Y}}\Ker \bar{f}$ in $M/N$. Hence $\bigcap_{\bar{f}\in\overline{Y}}\Ker \bar{f}=A/N$ for some submodule $A$ of $M$. Since $M$ is quasi-projective, given $\bar{f}\in\overline{Y}$ there exists $f\in\End_R(M)$ such that $\pi f=\bar{f}\pi$. Let $\widehat{Y}=\{f\in\End_R(M)\mid \pi f=\bar{f}\pi\text{ for some }\bar{f}\in\overline{Y}\}$. It follows that $f(A)\subseteq N$ for all $f\in \widehat{Y}$ and so, $gf(A)=0$ for all $f\in \widehat{Y}$ and all $g\in I$. Let $Y=\{gf\mid g\in I\;\text{and}\;f\in \widehat{Y}\}$. Therefore, $A\subseteq\bigcap_{gf\in Y}\Ker gf$. Now, let $x\in\bigcap_{gf\in Y}\Ker gf$ and let $\bar{f}\in\overline{Y}$. Then $\bar{f}\pi(x)=\pi(f(x))$. Since $g(f(x))=0$ for all $g\in I$, $f(x)\in N$. Hence $\pi(f(x))=0$. It follows that $x\in A$. Thus $A=\bigcap_{gf\in Y}\Ker gf$.
\end{proof}

Recall that a module $M$ is said to be \emph{retractable} if $\Hom_R(M,N)\neq 0$ for every nonzero submodule $N$ of $M$.

\begin{lemma}\label{fmret}
	Let $M$ be $M^{(\Lambda)}$-projective for every index set $\Lambda$ and let $N\leq M$. If $M$ is retractable, then 
	\begin{enumerate}
		\item $M/\ann_M^r(N)$ is retractable.
		\item $M/\ann_M(N)$ is retractable. 
	\end{enumerate}
\end{lemma}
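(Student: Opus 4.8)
The plan is to handle both quotients by a single device. Write $L$ for the fully invariant submodule under consideration (either $\ann^r_M(N)$ or $\ann_M(N)$; the former is fully invariant precisely because $M$ is $M^{(\Lambda)}$-projective, by the Remark following the definition of the two annihilators). Since $L$ is fully invariant, every $f\in\Hom_R(M,K)$ satisfies $f(L)\subseteq L$ and hence induces $\bar f\colon M/L\to K/L$, with $\bar f=0$ exactly when $f(M)\subseteq L$. Thus it suffices to show that for each $K$ with $L<K\leq M$ there is some $f\in\Hom_R(M,K)$ with $f(M)\not\subseteq L$, for then $\bar f$ is a nonzero map $M/L\to K/L$. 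Equivalently, I would argue by contradiction: assuming $\Hom_R(M/L,K/L)=0$ forces $\bar f=0$, i.e. $f(M)\subseteq L$, for every $f\colon M\to K$, so that $M_MK=\sum\{f(M)\mid f\in\Hom_R(M,K)\}\subseteq L$. The whole problem then reduces to deriving a contradiction from $M_MK\subseteq L$ together with $K\not\subseteq L$.

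For (1), take $L=\ann^r_M(N)$. Since $\ann^r_M(N)$ is the largest submodule with $N_M\ann^r_M(N)=0$, the hypothesis $K\not\subseteq\ann^r_M(N)$ forces $N_MK\neq 0$. On the other hand, from $M_MK\subseteq\ann^r_M(N)$ and monotonicity of the product I get $N_M(M_MK)\subseteq N_M\ann^r_M(N)=0$; associativity of the product, available because $M$ is $M^{(\Lambda)}$-projective, rewrites this as $(N_MM)_MK=0$. Since the identity map of $M$ gives $N\subseteq N_MM$, monotonicity in the first variable yields $N_MK\subseteq (N_MM)_MK=0$, contradicting $N_MK\neq 0$. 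Hence $\Hom_R(M/\ann^r_M(N),K/\ann^r_M(N))\neq 0$ and $M/\ann^r_M(N)$ is retractable. Note that retractability of $M$ is not actually used in this part.

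For (2), take $L=\ann_M(N)$, which is the largest submodule with $\ann_M(N)_MN=0$, so that $K\not\subseteq\ann_M(N)$ gives $K_MN\neq 0$. From $M_MK\subseteq\ann_M(N)$ and monotonicity, $(M_MK)_MN\subseteq\ann_M(N)_MN=0$, and associativity rewrites this as $M_M(K_MN)=0$. This is exactly where retractability enters: since $K_MN\neq 0$ and $M$ is retractable, $\Hom_R(M,K_MN)\neq 0$, so some nonzero $g\colon M\to K_MN$ has $g(M)\neq 0$, whence $M_M(K_MN)\neq 0$, a contradiction. Therefore $M/\ann_M(N)$ is retractable as well.

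I expect no serious obstacle here, since each case collapses to a two-line product computation. The points I would check most carefully are the correct use of the maximality characterizations of $\ann^r_M(N)$ and $\ann_M(N)$, the associativity of the product (which rests on the $M^{(\Lambda)}$-projectivity hypothesis), and, for (2), the single use of retractability through the fact that $M_MX\neq 0$ for every nonzero submodule $X$. Once these three ingredients are in place the argument is immediate, and the uniform reduction in the first paragraph is what makes the two parts nearly identical.
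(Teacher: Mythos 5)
Your proposal is correct and follows essentially the same route as the paper: in each case you assume $\Hom_R(M/L,K/L)=0$, deduce $M_MK\subseteq L$ from full invariance of $L$, and then use associativity of the product together with the maximality characterization of the annihilator (plus retractability of $M$ only in part (2), exactly as in the paper) to force $K\subseteq L$. The only differences are cosmetic (contradiction phrasing and the uniform treatment of the two cases).
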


\begin{proof}
	(1) Let $K/\ann_M^r(N)$ be a submodule of $M/\ann_M^r(N)$ and suppose that 
	\[\Hom_R(M/\ann_M^r(N),K/\ann_M^r(N))=0.\]
	Since $\ann_M^r(N)$ is fully invariant in $M$, any homomorphism $f:M\to K$ induces a homomorphism $\overline{f}:M/\ann_M^r(N)\to K/\ann_M^r(N)$ such that $\pi f=\overline{f}\pi$ where $\pi:M\to M/\ann_M^r(N)$ is the canonical projection. This implies that for all $f\in\Hom_R(M,K)$, $f(M)\subseteq \ann_M^r(N)$. Therefore $0=N_M(M_MK)=(N_MM)_MK=N_MK$. Hence $K\subseteq\ann_M^r(N)$ and so $K/\ann_M^r(N)=0$. Thus $M/\ann_M^r(N)$ is retractable.
	
	(2) Let $K/\ann_M(N)$ be a submodule of $M/\ann_M(N)$ and suppose that 
	\[\Hom_R(M/\ann_M(N),K/\ann_M(N))=0.\]
	As in the previous proof, we get $0=tr^M(K)_MN=(M_MK)_MN=M_M(K_MN)=tr^M(K_MN)$. This implies that $K_MN=0$ because $M$ is retractable. Thus, $K\subseteq \ann_M(N)$. Hence $M/\ann_M(N)$ is retractable.
\end{proof}

\begin{lemma}\label{mgolsgol}
Suppose $M$ is $M^{(\Lambda)}$-projective for every index set $\Lambda$ and retractable. If $M$ is Goldie then $S=\End_R(M)$ is a right Goldie ring.
\end{lemma}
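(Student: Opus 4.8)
The plan is to show that the two Goldie conditions for the ring $S=\End_R(M)$—namely acc on right annihilators and finite right uniform dimension—both follow from the corresponding hypotheses on $M$. Since $M$ is Goldie, by definition $M$ satisfies acc on annihilators and has finite uniform dimension, so I can invoke the results already established in the excerpt. The retractability hypothesis together with $M^{(\Lambda)}$-projectivity is what will let me transfer the lattice structure of submodules of $M$ to the lattice of right annihilators/cyclic right ideals of $S$.

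\emph{First I would handle the acc on right annihilators.} This is essentially already done: Proposition \ref{maccsacc} states that if $M$ satisfies acc on annihilators, then $S$ satisfies acc on right annihilators. Since $M$ is Goldie it satisfies acc on annihilators by definition, so $S$ satisfies acc on right annihilators immediately.

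\emph{The main work is the finite right uniform dimension of $S$.} I would set up a correspondence between right ideals of $S$ and submodules of $M$. Given a right ideal (or a principal right ideal $fS$) one associates the submodule $f(M) = \Hom_R(M, f(M))(M)$, or more usefully $\mathrm{tr}^M$-type traces; the key identity $fS = \Hom_R(M, fSM)$ from \cite[18.4]{wisbauerfoundations} (used already in the proof of Lemma \ref{nilpsubnil}) lets me pass between a cyclic right ideal of $S$ and the image submodule of $M$. The plan is to prove that this correspondence sends independent (direct-sum) families of right ideals of $S$ to independent families of submodules of $M$: if $f_1 S, \dots, f_n S$ are independent right ideals, I would show that the images $f_1(M), \dots, f_n(M)$ form an independent family of nonzero submodules of $M$. \textbf{Retractability is exactly the ingredient that guarantees these images are nonzero} (a nonzero endomorphism has nonzero image, and conversely every nonzero submodule receives a nonzero map), so an infinite independent family in $S$ would produce an infinite independent family in $M$, contradicting the finite uniform dimension of $M$. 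Hence $S$ has finite right uniform dimension. Combining the two parts, $S$ is a right Goldie ring.

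\emph{The step I expect to be the main obstacle} is establishing that independence of the right ideals $f_i S$ in $S$ transfers to independence of the submodules $f_i(M)$ in $M$, and in particular ruling out unwanted overlaps among the images. The quasi-projectivity and $M^{(\Lambda)}$-projectivity of $M$ must be used carefully here to realize elements of $f_i(M)$ as images under maps in $f_i S$ and to control intersections; one cannot simply assume the image functor is injective on the subobject lattice. I would likely argue by contradiction on a nonzero element of an intersection, lifting it back through retractability and the projectivity-induced surjection $\End_R(M)\to\End_R(\overline{M})$ to contradict the direct-sum assumption in $S$. If a direct argument on uniform dimension proves awkward, an alternative is to show that the correspondence preserves essential submodules/essential right ideals and then deduce the dimension bound from the essentiality side, again leaning on retractability to keep images nonzero.
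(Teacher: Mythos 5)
Your proposal follows the same two-step structure as the paper's proof: the acc on right annihilators is obtained from Proposition \ref{maccsacc} exactly as in the text, and the transfer of finite uniform dimension from $M$ to $S_S$ is handled in the paper by a single citation (Theorem 2.6 of the reference \emph{HaghanyStudy} on retractable modules), which is precisely the statement you sketch via the correspondence between independent right ideals $f_iS$ and the independent nonzero images $f_i(M)$, with retractability supplying the nonzero map into any nonzero intersection and (quasi-)projectivity lifting it back into the right ideals. Your argument for that second step is the standard proof of the cited result and is sound, so the only difference is that you open the black box the paper leaves closed.
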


\begin{proof}
By Proposition \ref{maccsacc}, $S$ satisfies acc on right annihilators. Since $M$ has finite uniform dimension, so does $S_S$ by \cite[Theorem 2.6]{HaghanyStudy}. Thus, $S$ is a right Goldie ring.
\end{proof}

\begin{thm}\label{main}
Suppose $M$ is $M^{(\Lambda)}$-projective for every index set $\Lambda$ and retractable. If $M$ is a Goldie module and $N<M$ is a fully invariant nil-submodule, then $N$ is nilpotent. 
\end{thm}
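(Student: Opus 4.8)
The plan is to argue by contradiction. Assume the fully invariant nil-submodule $N$ is not nilpotent, so that $N^{j}\neq 0$ for every $j>0$, and aim to produce inside $S=\End_R(M)$ a configuration incompatible with $S$ being a right Goldie ring (Lemma \ref{mgolsgol}). Since $N$ is fully invariant, every power $N^{j}$ is fully invariant with $N^{j+1}\subseteq N^{j}$, so $\mathbf{l}_N(N)\subseteq\mathbf{l}_N(N^{2})\subseteq\cdots$ is an ascending chain which, by Corollary \ref{dccrn}, stabilises at some $k$. I would then pass to $\overline{M}=M/\ann_M(N^{k})$, a quotient by the fully invariant annihilator $\ann_M(N^{k})=\bigcap_{f\in\Hom_R(M,N^{k})}\Ker f$. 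This $\overline{M}$ is again $\overline{M}^{(\Lambda)}$-projective for every $\Lambda$, is retractable by Lemma \ref{fmret}(2), satisfies acc on annihilators by Lemma \ref{accmoduloann}, and one checks that it is still a Goldie module, so that $\overline{S}=\End_R(\overline{M})$ is right Goldie. Writing $\overline{N}$ for the image of $N$, Lemma \ref{factornil} shows $\overline{N}$ is a nil-submodule, and a short computation with Lemma \ref{epiproduct} and associativity of the product shows that $\overline{N}$ is still non-nilpotent and that now $\mathbf{l}_{\overline{N}}(\overline{N}^{\,j})=0$ for all $j>0$; here the stabilisation at $k$ is exactly what forces the vanishing, since $x{}_{\overline{M}}\overline{N}^{\,j}=0$ lifts to ${Rx}\,{}_{M}N^{\,j+k}=0$, whence $x\in\mathbf{l}_N(N^{j+k})=\mathbf{l}_N(N^{k})$.

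Now Proposition \ref{subm} applies to $\overline{N}<\overline{M}$, producing submodules $A_{1},A_{2},\dots\subseteq\overline{N}$ with $B_{k}:={A_1}{}_{\overline{M}}\cdots{}_{\overline{M}}A_{k}\neq 0$ for all $k$ and ${B_k}{}_{\overline{M}}A_{n}=0$ whenever $n\le k$. Each $A_{i}$, and hence each $B_{k}$, is fully invariant, so $B_{k+1}={B_k}{}_{\overline{M}}A_{k+1}\subseteq B_{k}$, while ${B_k}{}_{\overline{M}}A_{k+1}=B_{k+1}\neq 0$. From $B_{k+1}\subseteq B_{k}$ one gets $\ann^{r}_{\overline{M}}(B_{k})\subseteq\ann^{r}_{\overline{M}}(B_{k+1})$, and since $A_{k+1}\subseteq\ann^{r}_{\overline{M}}(B_{k+1})$ but $A_{k+1}\not\subseteq\ann^{r}_{\overline{M}}(B_{k})$, this ascending chain of right annihilators is strict. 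The strict chain already records that $\overline{N}$ is very far from nilpotent; note, however, that a strictly ascending chain of right annihilators violates neither acc on annihilators nor the dcc of Lemma \ref{dccannr}, so the contradiction itself must come from the finite uniform dimension of $\overline{S}$.

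The final and most delicate step is the passage to $\overline{S}$, modelled on \cite[Lemma 8]{lanski1969nil}. Using retractability of $\overline{M}$ I would choose $0\neq f_{k}\in\Hom_R(\overline{M},B_{k})$ for each $k$. The containment $\Hom_R(\overline{M},A_{n})\,\Hom_R(\overline{M},B_{k})\subseteq\Hom_R(\overline{M},{B_k}{}_{\overline{M}}A_{n})$, together with ${B_k}{}_{\overline{M}}A_{n}=0$ for $n\le k$, yields $g f_{k}=0$ for every $g\in\Hom_R(\overline{M},A_{n})$ with $n\le k$, whereas ${B_k}{}_{\overline{M}}A_{k+1}\neq 0$ supplies endomorphisms landing in $A_{k+1}$ that do not annihilate $B_{k}$. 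Assembling these data as in Lanski's Lemma 8 into elements of $\overline{S}$, one obtains an infinite independent family of right ideals of $\overline{S}$, contradicting the finiteness of the right uniform dimension of the right Goldie ring $\overline{S}$. This contradiction forces $N$ to be nilpotent.

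The main obstacle is precisely this last construction. The vanishing ${B_k}{}_{\overline{M}}A_{n}=0$ and its endomorphism-ring translation are one-sided and are controlled only by containments that give upper bounds, whereas the non-vanishing of ${B_k}{}_{\overline{M}}A_{k+1}$ must be upgraded to \emph{genuinely independent} nonzero elements of $\overline{S}$; doing so requires choosing the $f_{k}$ and the multiplying endomorphisms simultaneously and compatibly, which is the technical heart of \cite[Lemma 8]{lanski1969nil}. A secondary point that must be verified with care is that finite uniform dimension is indeed inherited by the annihilator quotient $\overline{M}$, so that $\overline{S}$ is genuinely right Goldie.
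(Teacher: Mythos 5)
Your opening moves match the paper's: stabilise the chain of annihilators of the powers $N^j$ at some $k$, pass to $\overline{M}=M/\ann_M(N^k)$, check that $\mathbf{l}_{\overline{N}}(\overline{N}^{\,j})=0$ for all $j$, and invoke Proposition \ref{subm} to produce the $A_i$. From there, however, your endgame has two genuine gaps. The first is that you locate the final contradiction in $\overline{S}=\End_R(\overline{M})$, which requires $\overline{M}$ to be a Goldie module; but finite uniform dimension does not pass to quotients in general (a uniform module can have a factor of infinite uniform dimension), and nothing in the paper establishes it for $M/\ann_M(N^k)$. You flag this as a ``secondary point to be verified with care,'' but it is not verifiable from the stated hypotheses, and the paper's proof is deliberately structured to avoid it: the $A_i$ are pulled back to $M$ and intersected with $N$ to get $B_i=A_i\cap N$, which are nonzero because $0\neq\mathbf{l}_N(N)\subseteq\ann_M(N^k)\cap N\subseteq A_i\cap N$ (and if $\mathbf{l}_N(N)=0$ then $\mathbf{l}_N(N^j)=0$ for all $j$ and Proposition \ref{subm} applies directly in $M$ with no quotient at all). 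All subsequent work then happens in $S=\End_R(M)$, which is right Goldie by Lemma \ref{mgolsgol}.

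The second gap is the construction of the Lanski configuration. Choosing $0\neq f_k\in\Hom_R(\overline{M},B_k)$ independently for each $k$ by retractability gives no control over the compositions: there is no reason that $f_i\cdots f_2f_1\neq 0$. What is needed, and what the paper extracts, is a \emph{composable} sequence $f_i\colon M\to B_i$ with $f_i\cdots f_2f_1\neq 0$ for all $i$; this comes from unwinding the non-vanishing of the iterated product ${B_1}_M\cdots{}_MB_i$ through the very definition of $-_M-$ (each factor contributes a homomorphism applied to the image of the previous ones), not from retractability. The vanishing condition then follows because $f_{j+k}\cdots f_{j+1}$ lands in $N^k$ and condition (2) kills $\mathbf{l}_N(N^k)$-type products. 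Note also that Lanski's Lemma 8 does not perform the ``simultaneous compatible choice'' you defer to it: it takes elements already satisfying the two conditions as input and produces the infinite direct sum of right ideals. So the step you identify as the main obstacle is precisely the step your proposal leaves unproved, and your digression on the strictly ascending chain of right annihilators $\ann^r_{\overline{M}}(B_k)$ does not substitute for it, as you yourself observe.
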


\begin{proof}
There is an ascending chain of submodules of $M$,
\[\ann_M (N)\subseteq \ann_M (N^2)\subseteq \ann_M (N^3)\subseteq\cdots\]
By hypothesis, there exists a positive integer $k$ such that $\ann_M (N^k)=\ann_M (N^{k+i})$ for all $i>0$. Denote $\overline {N}=(N+\ann_M(N^k))/\ann_M (N^k)$ and $\overline {M}=M/\ann_M(N^k)$.  We claim that $\mathbf{l}_{\overline {N}}(\overline {N}^j)=0$ for all $j>0$. 
If $\mathbf{l}_{\overline {N}}(\overline {N}^j)=A/\ann_M(N^k)$ with $A\leq M$, then $A_MN^j\leq \ann_M(N^k)$. Hence, $A_MN^{k+j}=(A_MN^j)_MN^{k}=0$. Therefore $A\subseteq \ann_M(N^{k+j})=\ann_M(N^k)$. Thus $\mathbf{l}_{\overline {N}}(\overline {N}^j)=0$, proving the claim.

If $N$ is not nilpotent, then $\overline {N}$ is a nil-submodule of $\overline {M}$ by Lemma \ref{factornil}. Moreover, $\overline {M}$ is retractable and satisfies acc on annihilators by Lemma \ref{fmret} and Lemma \ref{accmoduloann}. Hence, there exist nonzero submodules $\overline{A_1},...,\overline{A_i},...\subseteq \overline{N}$ by Proposition \ref{subm} such that 
\begin{itemize}
\item[(i)] ${\overline{A_1}}_{\overline{M}}\cdots_{\overline{M}}{\overline{A_i}}\neq 0$ for all $i>0$; and
\item[(ii)] ${\overline{A_1}}_{\overline{M}}\cdots_{\overline{M}}{\overline{A_i}}_{\overline{M}}\overline{A_j}=0$ if $j\leq i$.
\end{itemize}
Consider the inverse images of the submodules $\overline{A_i}$, $\ann_M(N^{k})\subseteq
A_1,..,A_i,...\subseteq N+\ann_M(N^k)$. Intersecting with $N$, we have
\[\mathbf{l}_N(N)\subseteq \ann_M(N^k)\cap N \subseteq (A_1\cap N),...,(A_i\cap N),...\subseteq N.\]
Suppose that $\mathbf{l}_N(N)\neq 0$. Then $A_i\cap N\neq 0$. Let $B_i$ denote the submodule $A_i\cap N$. Hence, the submodules $B_1,...,B_i,...$ satisfy:  
\begin{enumerate}
\item ${B_1}_M\cdots_M{B_i}\neq 0$ for all $i>0$; and
\item ${B_1}_M\cdots_M{B_i}_M{B_j}\subseteq \mathbf{l}_N(N^k)$ if $j\leq i$.
\end{enumerate}
Condition (1) implies that there exists a sequence $\{f_1,f_2,...,f_i,...\}\subseteq\Hom_R(M,N)$, with $f_i:M\to B_i$ for all $i>0$, such that $f_i\cdots f_2f_1\neq 0$ for all $i>0$ and the condition (2) says that $g(f_jf_i\cdots f_2f_1(M))=0$ for all $g:M\to N^k$ if $j\leq i$. Note that $f_{i+1}f_i(M)\subseteq f_{i+1}(N)\subseteq N^2$. Consider $\{f_{j+1},...,f_{j+k}\}$ with $j\leq i$. Then $f_{j+k}\cdots f_{j+1}(M)\subseteq N^k$, that is, $f_{j+k}\cdots f_{j+1}:M\to N^k$. It follows that $f_{j+k}\cdots f_{j+1}f_jf_i\cdots f_2f_1=0$ for $j\leq i$. Thus, we just got the elements described in \cite[Lemma 8]{lanski1969nil}, that is, elements $\{f_1,f_2,...,f_i,...\}\subseteq S$ such that 
\begin{enumerate}
\item $f_i\cdots f_2f_1\neq 0$ for all $i>0$; and
\item $f_{j+k}\cdots f_{j+1}f_jf_i\cdots f_2f_1=0$ for $j\leq i$.
\end{enumerate}
Since $M$ is a Goldie module, if follows from Lemma \ref{mgolsgol} that $S$ is a right Goldie ring. Following the proof of \cite[Theorem 1]{lanski1969nil}, we arrive to a contradiction and so $N$ is nilpotent. 

In the case $\mathbf{l}_N(N)=0$, then we start the proof with $M=\overline{M}$ and $A_i=\overline{A_i}$ for all $i>0$.
\end{proof}

\begin{cor}\label{primenilgoldie}
Suppose $M$ is $M^{(\Lambda)}$-projective for every index set $\Lambda$ and retractable. If $M$ is a Goldie module, then the prime radical of $M$ is nilpotent.
\end{cor}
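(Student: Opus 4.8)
The plan is to reduce the corollary directly to Theorem \ref{main} by identifying the prime radical with the submodule $\mathfrak{L}(M)$ and then checking that it meets the hypotheses of that theorem. First I would let $N$ denote the prime radical of $M$. Since $M$ is $M^{(\Lambda)}$-projective for every index set $\Lambda$, Corollary \ref{nesL} gives $N=\mathfrak{L}(M)$, the sum of all locally nilpotent submodules of $M$. This is the only place where the identification is needed; everything else is a matter of verifying that $\mathfrak{L}(M)$ qualifies as an input to Theorem \ref{main}.

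Next I would check the three conditions Theorem \ref{main} imposes on the submodule: that it is fully invariant, that it is a nil-submodule, and that it is a proper submodule of $M$. Proposition \ref{Lfiyrad} states at once that $\mathfrak{L}(M)$ is a fully invariant locally nilpotent submodule, which settles full invariance. The remark recording that every locally nilpotent submodule is a nil-submodule then shows that $N$ is a nil-submodule. For properness, I would appeal to the discussion following the definition of the prime radical: under the standing projectivity hypothesis $M$ has maximal submodules, each maximal submodule contains a prime submodule, so $\mathrm{Spec}(M)\neq\emptyset$ and hence $N$, being the intersection of the (proper) prime submodules, is itself proper, i.e. $N<M$.

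With these verifications in place, Theorem \ref{main} applies verbatim: $M$ is $M^{(\Lambda)}$-projective for every index set $\Lambda$, retractable, and Goldie, and $N<M$ is a fully invariant nil-submodule, so $N$ is nilpotent. As $N$ is the prime radical, this is exactly the assertion to be proved.

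I do not expect a genuine obstacle, since the substantive argument is carried out in Theorem \ref{main} and in the structure theory of $\mathfrak{L}(\_)$ from Section \ref{LocNilSub}. The one point deserving care is confirming the strict inclusion $N<M$ demanded by Theorem \ref{main}, because that theorem excludes the case $N=M$; this is precisely what the nonemptiness of $\mathrm{Spec}(M)$ guarantees, so I would make sure to invoke it explicitly rather than take properness of the prime radical for granted.
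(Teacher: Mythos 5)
Your proposal is correct and is exactly the argument the paper intends (the paper leaves the corollary without an explicit proof, relying on Theorem \ref{main} together with Corollary \ref{nesL}, Proposition \ref{Lfiyrad}, and the remark that locally nilpotent submodules are nil-submodules). Your explicit verification of properness of the prime radical via the nonemptiness of $Spec(M)$ is a worthwhile detail to spell out, since Theorem \ref{main} does require $N<M$.
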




\bibliography{sjnames,biblio}
\bibliographystyle{acm} 


\end{document}